\DeclareMathAlphabet{\mathpzc}{OT1}{pzc}{m}{it}
\newcommand{\bff}{\mathpzc{f}}
\newcommand{\mcA}{\mathcal{A}}
\newcommand{\mcB}{\mathcal{B}}
\newcommand{\mcF}{\mathcal{F}}
\newcommand{\mcL}{\mathcal{L}}
\newcommand{\ul}{\underline}
\newcommand{\citecond}[1]{\textnormal{\textbf{#1}}}
\newcommand{\hnorm}{|\hspace{-.1em}|\hspace{-.1em}|}
\newcommand{\nnsup}[2]{\sup_{0\not=#1\in#2}}
\newcommand{\les}{\lesssim}
\newcommand{\ges}{\gtrsim}
\newtheorem{notation}{Notation}
\begin{document}

\title{A robust all-at-once multigrid method for the Stokes control problem\thanks{The research was funded by the Austrian Science Fund (FWF):
    J3362-N25.}
}
\author{Stefan Takacs}
\institute{Stefan Takacs \at
              Visiting Postdoc, Mathematical Institute, University of Oxford, United Kingdom \\
              Tel.: +44-1865-(6)15312 \\
              Fax: +44-1865-273583 \\
              \email{stefan.takacs@maths.ox.ac.uk} \\
              \email{stefan.takacs@numa.uni-linz.ac.at}           
}

\date{}

\maketitle

\begin{abstract}
In this paper we present an all-at-once multigrid method
for a distributed Stokes control problem (velocity tracking problem). 
For solving such a problem, we use the fact that the solution is
characterized by the optimality system (Karush-Kuhn-Tucker-system). The discretized optimality system
is a large-scale linear system whose condition number depends on the grid size and on
the choice of the regularization parameter forming a part of the problem. Recently,
block-diagonal preconditioners have been proposed, which allow
to solve the problem using a Krylov space method with convergence rates that are
robust in both, the grid size and the regularization parameter or cost parameter.
In the present paper, we develop an all-at-once multigrid method for a
Stokes control problem and show robust convergence, more precisely, we show that the method
converges with rates which are bounded away from one by a constant which is
independent of the grid size and the choice of the regularization or cost parameter.
\keywords{PDE-constrained optimization \and all-at-once multigrid methods \and Stokes control}
\end{abstract}

\section{Introduction}\label{sec:1}

In the present paper, we consider the following model problem
\emph{(distributed velocity tracking problem)}.
Let $\Omega\subseteq\mathbb{R}^d$ be a bounded domain with $d\in\{2,3\}$.
Find a velocity field $u\in [H^1(\Omega)]^d$, a
pressure distribution
$p\in L^2(\Omega)$ and a control (force field) $f\in [L^2(\Omega)]^d$ such that
the tracking functional
\begin{equation}\nonumber
        J(u,f) = \frac12 \|u-u_D\|_{L^2(\Omega)}^2 +
\frac{\alpha}{2} \|f\|_{L^2(\Omega)}^2
\end{equation}
is minimized subject to the Stokes equations
\begin{equation}\nonumber
\begin{aligned}
        -\Delta u + \nabla p &= f \mbox{ in } \Omega,  \\
        \nabla \cdot u &= 0 \mbox{ in } \Omega, \\
        u &= 0
        \mbox{ on } \partial \Omega. \\
\end{aligned}
\end{equation}
The cost parameter or regularization
parameter~$\alpha>0$ and the desired state (desired velocity field) $u_D\in [L^2(\Omega)]^d$ are assumed
to be given. To enforce uniqueness of the solution, we additionally require
$\int_{\Omega} p \mbox{ d}x=0$.

Here and in what follows, $L^2(\Omega)$ and~$H^1(\Omega)$ denote the 
standard Lebesgue and Sobolev spaces with associated standard
norms~$\|\cdot\|_{L^2(\Omega)}$ and~$\|\cdot\|_{H^1(\Omega)}$,
respectively.

The main goal of this work is to construct and to analyze numerical
methods that produce an approximate solution
to the optimization problem, where the computational complexity can be
bounded by the number of unknowns times a
constant which is independent of the grid level
and the choice of the parameter~$\alpha$, in particular for 
small values of~$\alpha$.

The solution of the optimization problem is characterized by the
Karush-Kuhn-Tucker-system (KKT-system).
As we are interested in good approximations of the solution, the
discretization of the KKT-system leads to a large-scale linear system. This linear system will
be solved with multigrid methods
because they are one of the fastest known methods for such problems.
Originally, multigrid methods have been designed and analyzed for
elliptic problems.
They also work well for saddle point problems (like KKT-systems)
and have gained growing interest in this area, see,
e.g.,~\cite{Borzi:Schulz:2009} and the references cited there.

The unknowns of the discretized KKT-system for a PDE-constrained
optimization problem can be partitioned into primal variables and 
dual variables. In our case, the primal variables 
consist of state variables (the velocity field~$u$ and the pressure distribution~$p$) and control 
variables (the force field~$f$). The dual variables are the Lagrange multipliers which
are introduced to incorporate the constraints.
One approach to solve such problems is to apply multigrid methods in
every step of an overall block-structured
iterative method to equations in just one of these blocks of variables.
Such methods have been proposed, e.g., in~\cite{Zulehner:2010,Kollmann:Zulehner:2012,Pearson:2012}.

Another approach, which we will follow here, is to apply the multigrid
idea directly to the (reduced or not reduced) KKT-system,
which is called an all-at-once approach. Such methods have been
proposed and discussed for the elliptic
optimal control problem, e.g., in
\cite{Schoeberl:Simon:Zulehner:2010,Takacs:Zulehner:2012}.

In this paper we present a convergence proof for multigrid methods
based on the classical splitting of the analysis
into smoothing property and approximation property, see~\cite{Hackbusch:1985}.

This paper is organized as follows. In Section~\ref{sec:2} we will
introduce the optimality system and discuss its discretization.
In Section~\ref{sec:3} we will introduce an all-at-once multigrid approach.
Its convergence will be proven in Section~\ref{sec:3a}. 
Numerical results which illustrate the convergence result will be presented
in Section~\ref{sec:4}. In Section~\ref{sec:5} we will close with conclusions.

\section{Optimality system and discretization}\label{sec:2}

For setting up the optimality system we need the
space $H^1_0(\Omega)$, the space of functions in $H^1(\Omega)$ vanishing on the boundary.
Moreover, we need the space $L^2_0(\Omega)$, which is the space of functions in $L^2(\Omega)$ with mean value $0$, i.e.,
\begin{equation*}
	L^2_0(\Omega) := \left\{ v \in L^2(\Omega) \;:\; \int_{\Omega} v \;\mbox{d}\xi = 0\right\}.
\end{equation*}
Both spaces are equipped with standard norms, i.e., $\|\cdot\|_{H^1_0(\Omega)}:=\|\cdot\|_{H^1(\Omega)}$ and
$\|\cdot\|_{L^2_0(\Omega)}:=\|\cdot\|_{L^2(\Omega)}$.

The solution of the problem is characterized by the Karush Kuhn Tucker
system (KKT-system), which reads as follows, cf.~\cite{Zulehner:2010} and others.

Find
$(u,p,f,\lambda,\mu)\in [H^1_0(\Omega)]^d\times
L^2_0(\Omega)\times [L^2(\Omega)]^d  \times [H^1_0(\Omega)]^d\times
L^2_0(\Omega) $ such that
\begin{align*}
          (u,\tilde{u})_{L^2(\Omega)} 
		+ (\nabla \lambda, \nabla \tilde{u})_{L^2(\Omega)}
		+ (\mu,\nabla\cdot \tilde{u})_{L^2(\Omega)} 
		&= (u_D,\tilde{u})_{L^2(\Omega)}\\
          (\nabla \cdot \lambda,\tilde{p})_{L^2(\Omega)} 
		&= 0\\
         \alpha (f,\tilde{f})_{L^2(\Omega)} 
		-(\lambda,\tilde{f})_{L^2(\Omega)} 
		&=0\\
          (\nabla u,\nabla\tilde{\lambda})_{L^2(\Omega)} 
		+(p,\nabla\cdot\tilde{\lambda})_{L^2(\Omega)}
		 -(f,\tilde{\lambda})_{L^2(\Omega)} 
		&=0\\
          (\nabla\cdot u,\tilde{\mu})_{L^2(\Omega)} 
		&= 0\
\end{align*}
holds for all $(\tilde{u},\tilde{p},\tilde{f},\tilde{\lambda},\tilde{\mu})\in
 [H^1_0(\Omega)]^d\times L^2_0(\Omega)\times [L^2(\Omega)]^d  \times
[H^1_0(\Omega)]^d\times L^2_0(\Omega)$.

The third line of the KKT-system directly implies $f = \alpha^{-1}
\lambda$.

This allows to eliminate the control $f$, which
leads to the reduced KKT-system, which reads as follows. Find
$x:=(u,p,\lambda,\mu)\in X:=[H^1_0(\Omega)]^d\times L^2_0(\Omega) \times
[H^1_0(\Omega)]^d\times L^2_0(\Omega) $ such that
\begin{equation}\nonumber
        \begin{aligned}
          (u,\tilde{u})_{L^2(\Omega)}
		+ (\nabla \lambda,\nabla \tilde{u})_{L^2(\Omega)} 
		+ (\mu,\nabla \cdot \tilde{u})_{L^2(\Omega)} 
		&= (u_D,\tilde{u})_{L^2(\Omega)}\\
          (\nabla \cdot \lambda,\tilde{p})_{L^2(\Omega)} 
		&= 0\\
          (\nabla u,\nabla \tilde{\lambda})_{L^2(\Omega)} 
		+ (p,\nabla \cdot \tilde{\lambda})_{L^2(\Omega)} 
		- \alpha^{-1} (\lambda,\tilde{\lambda})_{L^2(\Omega)}  
		&=0\\
          (\nabla\cdot u,\tilde{\mu})_{L^2(\Omega)} 
		&= 0\\
        \end{aligned}                
\end{equation}
holds for all $(\tilde{u},\tilde{p},\tilde{\lambda},\tilde{\mu})\in X$.

Certainly, the KKT-system can be rewritten as one variational equation
as follows. Find $x \in X $ such that
\begin{equation}\label{eq:gen:bil}
        \mcB(x,\tilde{x}) = \mcF(\tilde{x}) \qquad \mbox{for all }\tilde{x} \in X,
\end{equation}
where
\begin{align*}\nonumber
      &  \mcB((u,p,\lambda,\mu),(\tilde{u},\tilde{p},\tilde{\lambda},\tilde{\mu})) :=
		(u,\tilde{u})_{L^2(\Omega)} 
		+(\nabla \lambda,\nabla \tilde{u})_{L^2(\Omega)}
		+(\mu,\nabla\cdot\tilde{u})_{L^2(\Omega)}
	\\&\qquad
		+(\nabla \cdot\lambda,\tilde{p})_{L^2(\Omega)}
		+(\nabla u,\nabla \tilde{\lambda})_{L^2(\Omega)}
		+(p,\nabla\cdot\tilde{\lambda})_{L^2(\Omega)}
		- \alpha^{-1}(\lambda,\tilde{\lambda})_{L^2(\Omega)}
	\\&\qquad
		+ (\nabla\cdot u,\tilde{\mu})_{L^2(\Omega)}\mbox{ and}
	\\
	&        \mcF(\tilde{u},\tilde{p},\tilde{\lambda},\tilde{\mu}) := (u_D,\tilde{u})_{L^2(\Omega)}.
\end{align*}

We are interested in finding an approximative solution for equation~\eqref{eq:gen:bil}. Both, the
proposed solution strategy and the convergence analysis,
follow the abstract framework introduced in~\cite{Takacs:Zulehner:2012}. The
conditions, \citecond{(A1)}, \citecond{(A1a)}, \citecond{(A3)} and 
\citecond{(A4)}, mentioned in the present paper are the same conditions 
as in~\cite{Takacs:Zulehner:2012}.

For simplicity, we introduce the following notation.
\begin{notation}
	Throughout this paper,~$C>0$ is a generic constant, independent of
	the grid level~$k$ and the choice of the parameter~$\alpha$.
	For any scalars~$a$ and~$b$, we write~$a \les b$ (or $b \ges a$) if there is a 
	constant~$C>0$ such that~$a < C\,b$. We write~$a\eqsim b$ if~$a\les b\les a$.
\end{notation}

The following property guarantees existence and uniqueness of the solution.
\begin{description}
        \item[\citecond{(A1)}] The relation
                \begin{equation*}
                        \|x\|_{X} \les \nnsup{\tilde{x}}{X}
                                \frac{\mcB(x,\tilde{x})}{\|\tilde{x}\|_{X}}
                        \les \|x\|_{X}
                \end{equation*}
                holds for all $x\in X$.
\end{description}

In~\cite{Zulehner:2010}
it was shown that condition~\citecond{(A1)} is satisfied
for $X:=Y\times Y$, where $Y:=U\times P$, $U:=[H^1_0(\Omega)]^d$, $P:=L^2_0(\Omega)$,
equipped with norms
\begin{equation*}
        \|x\|_X^2  := \|(u,p,\lambda,\mu)\|_X^2 := \|(u,p)\|_Y^2 +
\alpha^{-1} \|(\lambda,\mu)\|_Y^2,
\end{equation*}
where
\begin{align*}
        \|(u,p)\|_Y^2 &:= \|u\|_U^2 + \|p\|_{P}^2,\\
        \|u\|_U^2 &:= \|u\|_{L^2(\Omega)}^2+\alpha^{1/2} \|u\|_{H^1(\Omega)}^2\mbox{ and}\\
         \|p\|_P^2 &:=  \nnsup{w}{[H^1_0(\Omega)]^d} \frac{\alpha (p,\nabla \cdot
w)_{L^2(\Omega)}^2}{\|w\|_{L^2(\Omega)}^2 + \alpha^{1/2}
\|w\|_{H^1(\Omega)}^2}.
\end{align*}

Using the following notation, we can express the norms in a nicer way.
\begin{notation}
	For any Hilbert space~$A$, the symbol $A^*$ denotes its dual space equipped
	with the dual norm
        \begin{equation*}
                \|u\|_{A^*} := \nnsup{w}{A}\frac{\langle u,w \rangle}{\|w\|_A},
        \end{equation*}
	where $\langle u,\cdot\rangle := u(w)$ denotes the duality pairing.

	For any Hilbert space~$A$ and any scalar $a>0$, the symbol~$a\, A$ 
	denotes the space on the underlying set of the Hilbert space~$A$ equipped with the norm
        \begin{equation*}
                \|u\|_{a\,A}^2 := a \|u\|_A^2.
        \end{equation*}
        For any two Hilbert spaces~$A$ and~$B$, the symbol
        $A\cap B$ denotes the space on the intersection of the underlying sets, 
	$\{u\in A \cap B\}$, equipped with the norm
        \begin{equation*}
                \|u\|_{A\cap B}^2 := \|u\|_A^2 + \|u\|_B^2,
	\end{equation*}
	and the symbol~$A+B$ denotes the space on the algebraic sum of the
	underlying sets, $\{u_1+u_2\;:\; u_1\in A, u_2\in B\}$, equipped with the norm
        \begin{equation*}
                \|u\|_{A+B}^2 := \inf_{u_1\in A, u_2\in B, u=u_1+u_2} 
			\|u_1\|_A^2 + \|u_2\|_B^2.
        \end{equation*}
\end{notation}

The spaces $A^*$, $a\, A$, $A\cap B$ and $A+B$ are Hilbert spaces. The fact that $A^*$
is a Hilbert space follows directly from the Riesz representation theorem, see, e.g.,
Theorem~1.2 in~\cite{Adams:Fournier}. The fact that $a\, A$ is a Hilbert space
is obvious and for the latter two see, e.g., Lemma~2.3.1 in~\cite{Bergh:Loefstroem:1976}.

We immediately see, that the norm on $U$ can be rewritten as follows
\begin{equation*}
        \|u\|_U = \|u\|_{L^2(\Omega)\cap \alpha^{1/2} H^1(\Omega)}.
\end{equation*}

To reformulate the norm $\|\cdot\|_P$, we need a regularity assumption.

\begin{description}
        \item[\citecond{(R)}] \emph{Regularity of the generalized Stokes problem.}
        Let $f\in [L^2(\Omega)]^d$ and $g\in H^1_0(\Omega)\cap L^2_0(\Omega)$ be arbitrarily 
        but fixed and
        $(u,p)\in [H^1_0(\Omega)]^d\times L^2_0(\Omega)$ be the solution 
        of the Stokes problem, i.e., such that
        \begin{equation*}
        \begin{array}{lclclcl}
          (\nabla u,\nabla \tilde{u})_{L^2(\Omega)}
          	 &+&(p,\nabla\cdot\tilde{u})_{L^2(\Omega)}  &=&(f,\tilde{u})_{L^2(\Omega)}\\
          (\nabla\cdot u,\tilde{p})_{L^2(\Omega)} &&& =& (g,\tilde{p})_{L^2(\Omega)} \\
        \end{array}        
        \end{equation*}
        holds for all $(\tilde{u},\tilde{p}) \in [H^1_0(\Omega)]^d\times L^2_0(\Omega)$.

        Then $(u,p)\in [H^2(\Omega)]^d\times H^1(\Omega)$ and
        \begin{equation*}
                \|u\|_{H^2(\Omega)}^2 + \|p\|_{H^1(\Omega)}^2 \les \|f\|_{L^2(\Omega)}^2+\|g\|_{H^1(\Omega)}^2.
        \end{equation*}
\end{description}

This condition is satisfied for convex polygonal domains, see Lemma~2.1 in~\cite{Takacs:2013} which
is a direct consequence of Theorem~2 in~\cite{Kellogg:Osborn:1976}.

\begin{lemma}\label{lem:pe}
	If~\citecond{(R)} is satisfied, then
	\begin{equation*}
		\|p\|_P \eqsim \|p\|_{\alpha H^1(\Omega)+\alpha^{1/2} L^2(\Omega)}
	\end{equation*}
	holds for all $p\in L^2_0(\Omega)$.
\end{lemma}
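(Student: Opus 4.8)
The plan is to interpret $\|p\|_P$ as a scaled dual norm of the distributional gradient and then to transfer the two elementary bijectivity properties of~$\nabla$ — a Poincar\'e estimate at the $L^2$-to-$H^1$ level and the inf-sup (Ne\v{c}as) estimate at the $H^{-1}$-to-$L^2$ level — to the sum space. First I would observe that the denominator in the definition of $\|\cdot\|_P$ is exactly $\|w\|_U^2$ for $w$ in $U:=[H^1_0(\Omega)]^d$ carrying the norm of $[L^2(\Omega)\cap\alpha^{1/2}H^1(\Omega)]^d$, so that $\|p\|_P=\alpha^{1/2}\|\nabla p\|_{U^*}$, where $\nabla p$ denotes the functional $w\mapsto(p,\nabla\cdot w)_{L^2(\Omega)}$. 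Using $[H^1_0(\Omega)]^*=H^{-1}(\Omega)$, $[L^2(\Omega)]^*\cong L^2(\Omega)$ and the rules $(A\cap B)^*=A^*+B^*$ and $(a\,A)^*=a^{-1}A^*$, I identify $U^*=[L^2(\Omega)+\alpha^{-1/2}H^{-1}(\Omega)]^d$ and hence
\[
  \|p\|_P^2=\alpha\,\|\nabla p\|_{[L^2(\Omega)]^d+\alpha^{-1/2}[H^{-1}(\Omega)]^d}^2.
\]
Since multiplying the defining infimum by $\alpha$ gives $\alpha\,\|p\|_{H^1(\Omega)+\alpha^{-1/2}L^2(\Omega)}^2=\|p\|_{\alpha H^1(\Omega)+\alpha^{1/2}L^2(\Omega)}^2$, the claim reduces to establishing, with constants independent of~$\alpha$, the equivalence $\|\nabla p\|_{[L^2]^d+\alpha^{-1/2}[H^{-1}]^d}\eqsim\|p\|_{H^1+\alpha^{-1/2}L^2}$ for $p\in L^2_0(\Omega)$.

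The bound ``$\les$'' is immediate: any splitting $p=p_1+p_2$ with $p_1\in H^1(\Omega)$ and $p_2\in L^2(\Omega)$ yields $\nabla p=\nabla p_1+\nabla p_2$ with $\|\nabla p_1\|_{L^2(\Omega)}\le\|p_1\|_{H^1(\Omega)}$ and $\|\nabla p_2\|_{H^{-1}(\Omega)}\le\|p_2\|_{L^2(\Omega)}$, and passing to the infimum over splittings transfers this to the sum norms.

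The direction ``$\ges$'' is the real content. Given a near-optimal splitting $\nabla p=g_1+g_2$ with $g_1\in[L^2(\Omega)]^d$ and $g_2\in[H^{-1}(\Omega)]^d$, I would recover a splitting of $p$ itself by solving two Stokes problems: let $(U_1,p_1)$ solve the Stokes system with right-hand side $g_1$ and $(U_2,p_2)$ solve it with right-hand side $g_2$, both with pressure in $L^2_0(\Omega)$. Standard Stokes well-posedness gives $\|p_2\|_{L^2(\Omega)}\les\|g_2\|_{H^{-1}(\Omega)}$, while the regularity assumption \citecond{(R)} (applied with $L^2$-data and $g=0$) gives the crucial estimate $\|p_1\|_{H^1(\Omega)}\les\|g_1\|_{L^2(\Omega)}$. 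The key algebraic observation is that the Stokes pressure of a field is precisely its Helmholtz potential, i.e.\ $\nabla p_i$ equals the projection of $g_i$ onto gradient fields (taken in $[L^2(\Omega)]^d$ resp.\ $[H^{-1}(\Omega)]^d$); consequently $\nabla(p_1+p_2)$ is the projection of $g_1+g_2=\nabla p$ onto gradient fields, which is $\nabla p$ itself. As $p_1,p_2,p$ all lie in $L^2_0(\Omega)$, this forces $p=p_1+p_2$, so that $\|p\|_{H^1+\alpha^{-1/2}L^2}^2\le\|p_1\|_{H^1(\Omega)}^2+\alpha^{-1/2}\|p_2\|_{L^2(\Omega)}^2\les\|g_1\|_{L^2(\Omega)}^2+\alpha^{-1/2}\|g_2\|_{H^{-1}(\Omega)}^2$; passing to the infimum over splittings of $\nabla p$ completes the estimate.

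I expect the main obstacle to be exactly this last step: an arbitrary splitting of $\nabla p$ need not split $p$, because $g_1$ and $g_2$ are individually not gradients. The device of solving a Stokes problem for each piece launders each $g_i$ into its gradient part while keeping the pressures mean-free, so that they recombine to $p$ with no leftover constant; the price is that one of the two pressure estimates must be lifted from the $L^2$-level to the $H^1$-level, which is impossible without the full elliptic regularity furnished by \citecond{(R)}. The remaining ingredients — the scaling identity for sum spaces, the consistency of the duality pairings in the Gelfand triple $H^1_0(\Omega)\subset L^2(\Omega)\subset H^{-1}(\Omega)$, and the harmless bookkeeping of mean values — are routine and I would not dwell on them.
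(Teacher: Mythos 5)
Your proposal is correct, but it cannot be "the same approach as the paper" for a simple reason: the paper does not prove Lemma~\ref{lem:pe} at all. It defers entirely to Theorem~3.2 of~\cite{Olshanskii:Peters:Reusken:2005}, remarking that the cited result even holds under a regularity assumption weaker than~\citecond{(R)}. Your argument is therefore a self-contained alternative, and its mechanism is the natural one behind such sum-space characterizations: rewrite $\|p\|_P=\alpha^{1/2}\|\nabla p\|_{U^*}$, identify $U^*=[L^2(\Omega)+\alpha^{-1/2}H^{-1}(\Omega)]^d$ via $(A\cap B)^*=A^*+B^*$ and $(a\,A)^*=a^{-1}A^*$, obtain $\|p\|_P\les\|p\|_{\alpha H^1(\Omega)+\alpha^{1/2}L^2(\Omega)}$ by differentiating an arbitrary splitting of $p$, and obtain the converse by solving one Stokes problem per summand of a near-optimal splitting $\nabla p=g_1+g_2$, where inf-sup stability gives $\|p_2\|_{L^2(\Omega)}\les\|g_2\|_{H^{-1}(\Omega)}$ and \citecond{(R)} gives $\|p_1\|_{H^1(\Omega)}\les\|g_1\|_{L^2(\Omega)}$; all constants are visibly independent of~$\alpha$. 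What the paper's citation buys is brevity and a sharper hypothesis (the equivalence is true under weaker regularity); what your proof buys is self-containedness and an explicit display of where the regularity enters, namely only in lifting the pressure estimate for the $L^2$-part of the splitting from $L^2$ to $H^1$.

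One step should be tightened. The assertion that "the Stokes pressure of a field is precisely its Helmholtz potential", i.e., that $\nabla p_i$ is the orthogonal projection of $g_i$ onto gradient fields, is false in general for Dirichlet boundary conditions: writing $\nabla p_i=g_i+\Delta U_i$, the field $\Delta U_i$ is divergence-free but its normal trace need not vanish, so it need not be orthogonal to gradients (and in $[H^{-1}(\Omega)]^d$ an orthogonal projection is not even canonically defined). Fortunately your argument does not use orthogonality. All it needs is that the Stokes solution operator is linear and maps a gradient datum $\nabla q$ with $q\in L^2_0(\Omega)$ to the pair $(0,q)$; both facts follow from uniqueness of the Stokes solution. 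Hence $(U_1+U_2,\,p_1+p_2)$ is the Stokes solution with datum $g_1+g_2=\nabla p$, which is $(0,p)$, and $p=p_1+p_2$ follows together with $U_1+U_2=0$. Phrased through this linearity-plus-uniqueness observation instead of the Helmholtz language, the recombination step is airtight and the proof stands.
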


This lemma was shown in Theorem~3.2 in~\cite{Olshanskii:Peters:Reusken:2005}
under a regularity assumption, which is weaker than regularity 
assumption~\citecond{(R)}.

The discretization of problem~\eqref{eq:gen:bil} is done using
standard finite element techniques.
We assume to have a sequence of girds obtained
by uniform refinement. On each
grid level~$k$, we discretize the problem using the Galerkin approach,
i.e., we have finite dimensional
spaces $X_k \subseteq X$ and consider the following problem. Find $x_k
\in X_k $ such that
\begin{equation}\label{eq:galerkin}
        \mcB(x_k,\tilde{x}_k) = \mcF(\tilde{x}_k) \qquad \mbox{for all
}\tilde{x}_k \in X_k.
\end{equation}
Using a nodal basis, we can represent this problem in matrix-vector
notation as follows:
\begin{equation}\label{eq:gen:bil:mv}
                \mcA_k\,\ul{x}_k = \ul{\bff}_k.
\end{equation}
Here and in what follows, any underlined quantity, like~$\ul{x}_k$, is
the representation of the corresponding
non-underlined quantity, here~$x_k$, with respect to a nodal basis of
the corresponding Hilbert space, here~$X_k$.

Existence and uniqueness of the
discretized problem is guaranteed
by the following condition.
\begin{description}
        \item[\citecond{(A1a)}] The relation
                \begin{equation*}
                        \|x_k\|_{X} \les \nnsup{\tilde{x}_k}{X_k}
                                \frac{\mcB(x_k,\tilde{x}_k)}{\|\tilde{x}_k\|_{X}}
                        \les \|x_k\|_{X}
                \end{equation*}
                holds for all $x_k\in X_k$.
\end{description}
Due to the fact that the model problem is indefinite,
condition~\citecond{(A1)}  does not imply condition~\citecond{(A1a)}.
For the Stokes problem itself, it is well-known that such a condition
(also known as \emph{discrete inf-sup condition})
can only be guaranteed if the discretization is chosen appropriately. The same
is true for the Stokes control problem. Fortunately, we can show
the discrete inf-sup condition~\citecond{(A1a)} for
the Stokes control problem based on pre-existing knowledge on the
discrete inf-sup condition
for the Stokes problem. This allows to show that all discretizations which are
suitable for the Stokes flow problem are also suitable
for the Stokes control problem.

We choose the space~$X_k$ as follows:
\begin{equation*}
      X_k := Y_k \times Y_k\qquad \mbox{where} \qquad Y_k:=U_k \times P_k\qquad
\end{equation*}
and the choice of $U_k\subseteq U=[H^1_0(\Omega)]^d$ 
and~$P_k \subseteq P = L^2_0(\Omega)$ is discussed below. Note that $X_k$ has product 
structure and that the state and the
adjoined state (Lagrange multipliers) are
discretized the same way. The same has already been done for
optimal control problems with elliptic state equation, cf.~\cite{Takacs:Zulehner:2012} and
many others.

Due to the fact that the grids are obtained by uniform refinement, the
discrete subsets are nested, i.e.,
$U_{k} \subseteq U_{k+1}$ and $P_{k} \subseteq P_{k+1}$. Therefore,
also $X_{k} \subseteq X_{k+1}$ holds.

The next step is to show condition~\citecond{(A1a)}. We have seen that
the analysis done in~\cite{Zulehner:2010}, applied
to the infinite dimensional spaces, shows condition~\citecond{(A1)}.
If the analysis done
in~\cite{Zulehner:2010} is applied to the discretized spaces, we obtain that
\begin{equation}\label{eq:a1pseudo}
        \|x_k\|_{X_k} \les \nnsup{\tilde{x}_k}{X_k}
                \frac{\mcB(x_k,\tilde{x}_k)}{\|\tilde{x}_k\|_{X_k}}
        \les \|x_k\|_{X_k}
\end{equation}
is satisfied for all $x_k \in X_k$, where
\begin{align*}
        \|x_k\|_{X_k}^2  &:= \|(u_k,p_k,\lambda_k,\mu_k)\|_{X_k}^2 :=
\|(u_k,p_k)\|_{Y_k}^2 + \alpha^{-1}
\|(\lambda_k,\mu_k)\|_{Y_k}^2,\\
        \|(u_k,p_k)\|_{Y_k}^2& := \|u_k\|_{U}^2+ \|p_k\|_{P_k}^2,\\
         \|p_k\|_{P_k}^2 &:=  \nnsup{w_k}{U_k} \frac{\alpha (p_k,\nabla
\cdot w_k)_{L^2(\Omega)}^2}{\|w_k\|_{L^2(\Omega)}^2 +
\alpha^{1/2} \|w_k\|_{H^1(\Omega)}^2}\mbox{ and}
\end{align*}
$\|\cdot\|_{U}^2$ is as above.

Note that this is not condition~\citecond{(A1a)}, as the norms
$\|\cdot\|_P$ and $\|\cdot\|_{P_k}$ are not equal. For
showing condition~\citecond{(A1a)}, it suffices to show that these two
norms are equivalent which implies also the equivalence of the norms $\|\cdot\|_X$ and 
$\|\cdot\|_{X_k}$. This can be shown using the following condition.
\begin{description}
        \item[\citecond{(S)}]
                The discretization of $P$ is $H^1$-conforming, i.e., $P_k\subseteq H^1(\Omega)$, and
		the \emph{weak inf-sup condition}
                \begin{equation*}
                        \nnsup{u_k}{U_k} \frac{(\nabla\cdot u_k,p_k)_{L^2(\Omega)}
}{\|u_k\|_{L^2(\Omega)} } \ges \|\nabla p_k\|_{L^2(\Omega)}
                \end{equation*}
                holds for all $p_k\in P_k$.
\end{description}
\begin{lemma}
        Assume that the discretization satisfies condition~\citecond{(S)}.
	Then condition~\citecond{(A1a)} is satisfied for the model problem.
\end{lemma}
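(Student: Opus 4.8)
The plan is to follow the reduction already indicated in the paragraph preceding the statement. By~\eqref{eq:a1pseudo} the inf--sup relation is known to hold with the \emph{discrete} norm $\|\cdot\|_{X_k}$, and the two norms $\|\cdot\|_{X}$ and $\|\cdot\|_{X_k}$ differ only through their pressure components, since the $U$-parts coincide. Hence it suffices to prove the scalar equivalence
\begin{equation*}
	\|p_k\|_P \eqsim \|p_k\|_{P_k} \qquad \mbox{for all } p_k\in P_k ,
\end{equation*}
because this immediately yields $\|x_k\|_X \eqsim \|x_k\|_{X_k}$ for all $x_k\in X_k$, and inserting that equivalence into~\eqref{eq:a1pseudo} turns it into~\citecond{(A1a)}, the equivalence constants being absorbed into the generic constant. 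From here on I would work only with this pressure equivalence.

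One inequality is free. Since $U_k\subseteq U$, the supremum defining $\|p_k\|_{P_k}$ ranges over a subset of the one defining $\|p_k\|_P$ with the identical integrand, so $\|p_k\|_{P_k}\les\|p_k\|_P$ (in fact with constant one). For the converse I would bring in Lemma~\ref{lem:pe}, which under~\citecond{(R)} gives $\|p_k\|_P \eqsim \|p_k\|_{\alpha H^1(\Omega)+\alpha^{1/2} L^2(\Omega)}$. Thus the whole task reduces to the discrete counterpart of Lemma~\ref{lem:pe}, namely
\begin{equation*}
	\|p_k\|_{\alpha H^1(\Omega)+\alpha^{1/2} L^2(\Omega)} \les \|p_k\|_{P_k} \qquad \mbox{for all } p_k\in P_k ,
\end{equation*}
the reverse estimate being automatic from the two facts just noted.

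To establish this discrete estimate I would exploit both parts of~\citecond{(S)}. Because the pressure space is $H^1$-conforming, every $p_k\in P_k$ lies in $H^1(\Omega)$, and since the test fields $w_k\in U_k\subseteq[H^1_0(\Omega)]^d$ vanish on $\partial\Omega$, integration by parts gives $(p_k,\nabla\cdot w_k)_{L^2(\Omega)} = -(\nabla p_k,w_k)_{L^2(\Omega)}$. The weak inf--sup condition in~\citecond{(S)} then supplies, for each $p_k$, test functions in $U_k$ whose divergence reproduces $\nabla p_k$ at the price of only the $L^2$-norm of $w_k$. Feeding these into the quotient defining $\|p_k\|_{P_k}$ and balancing their contribution across the crossover scale $\lambda\sim\alpha^{-1/2}$ that separates the regime where the $\alpha H^1(\Omega)$-term dominates from the regime where the $\alpha^{1/2} L^2(\Omega)$-term dominates, one recovers the sum norm. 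This mirrors the argument behind Lemma~\ref{lem:pe}, with the continuous regularity~\citecond{(R)} replaced by its discrete surrogate~\citecond{(S)}.

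The main obstacle is the mismatch between the two denominators: \citecond{(S)} controls the quotient with $\|w_k\|_{L^2(\Omega)}$ alone, whereas the denominator of $\|p_k\|_{P_k}$ is the combined expression $\|w_k\|_{L^2(\Omega)}^2+\alpha^{1/2}\|w_k\|_{H^1(\Omega)}^2$. One therefore cannot simply insert the inf--sup supremizer, since its $H^1$-seminorm is a priori uncontrolled, and bounding it by a plain inverse estimate would reintroduce a factor $h_k^{-1}$ and destroy robustness. Keeping the constant independent of both $\alpha$ and the grid level $k$ --- equivalently, proving the discrete sum-norm equivalence uniformly --- is the delicate point, and it is precisely here that the weak inf--sup structure of~\citecond{(S)}, with $\|w_k\|_{L^2(\Omega)}$ in the denominator and $\nabla p_k$ on the right-hand side, is indispensable.
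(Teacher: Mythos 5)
Your overall skeleton matches the paper's own proof: reduce \citecond{(A1a)} to the pressure-norm equivalence $\|p_k\|_P\eqsim\|p_k\|_{P_k}$ on $P_k$, note that the velocity parts of $\|\cdot\|_X$ and $\|\cdot\|_{X_k}$ coincide, and observe that $\|p_k\|_{P_k}\le\|p_k\|_P$ is free because the discrete supremum ranges over $U_k\subseteq U$. The genuine gap is the converse inequality, which is the entire analytic content of the lemma, and your proposal does not prove it. Having reduced it (via Lemma~\ref{lem:pe}) to the discrete estimate $\|p_k\|_{\alpha H^1(\Omega)+\alpha^{1/2}L^2(\Omega)}\les\|p_k\|_{P_k}$, your argument for that estimate consists of the sentence about ``balancing across the crossover scale'', and your closing paragraph then explains --- correctly --- why the naive implementation of that sentence fails: the supremizer provided by \citecond{(S)} is controlled only in $L^2(\Omega)$, while the denominator of $\|\cdot\|_{P_k}$ carries the additional term $\alpha^{1/2}\|w_k\|_{H^1(\Omega)}^2$, and taming it by an inverse inequality costs a factor $h_k^{-1}$ that destroys robustness. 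Diagnosing the obstacle is not the same as overcoming it. What is missing is the actual mechanism: the standard route is to take a near-supremizer $w$ of the continuous norm, replace it by a quasi-interpolant $\Pi_k w\in U_k$ that is simultaneously $L^2$- and $H^1$-stable, integrate the committed error $(p_k,\nabla\cdot(w-\Pi_k w))_{L^2(\Omega)}$ by parts (here the $H^1$-conformity of $P_k$ required in \citecond{(S)} enters), and absorb the resulting term $h_k\|\nabla p_k\|_{L^2(\Omega)}\|w\|_{H^1(\Omega)}$ by means of the weak inf-sup inequality; none of these steps appears in your text. The paper does not redo this work either, but it closes the gap by citation: Lemma~2.2 of \cite{Olshanskii:2012} states precisely that \citecond{(S)} implies ${\|\cdot\|_P}\eqsim{\|\cdot\|_{P_k}}$ on $P_k$, after which \citecond{(A1a)} follows from \eqref{eq:a1pseudo} exactly as you describe.

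A secondary defect: routing the hard direction through Lemma~\ref{lem:pe} silently imports the regularity assumption \citecond{(R)}, which is not among the hypotheses of the lemma you are proving (its only assumption is \citecond{(S)}). Even if the missing discrete estimate were supplied, your argument as structured would therefore establish a weaker statement than the one claimed, whereas the paper's citation-based proof needs \citecond{(S)} only.
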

\begin{proof}
	Lemma~2.2 in~\cite{Olshanskii:2012} states (provided that \citecond{(S)}
	is satisfied) that ${\|\cdot\|_P}\eqsim{\|\cdot\|_{P_k}}$ is satisfied.
	A direct consequence is $\|\cdot\|_X\eqsim\|\cdot\|_{X_k}$. Therefore,
	condition~\eqref{eq:a1pseudo} implies condition~\citecond{(A1a)}.
\qed\end{proof}

Note that condition~\citecond{(S)} is a standard condition which ensures that the
chosen discretization is stable for the Stokes problem.
In~\cite{Bercovier:Pironneau:1979,Verfuerth:1984} it was shown that
condition~\citecond{(S)} is satisfied for the Taylor-Hood element 
($P1-P2$-element) for polygonal
domains where at least one vertex of each element is located in the interior
of the domain. Here and in what follows we assume that the problem is
discretized with the Taylor-Hood element and that the mesh satisfies
the named condition.

\section{An all-at-once multigrid method}\label{sec:3}

The problem shall be solved with an all-at-once multigrid method. The
abstract algorithm for solving the discretized
equation~\eqref{eq:gen:bil:mv} on grid level~$k$ reads as follows.
Starting from an initial approximation~$\ul{x}^{(0)}_k$,
one iterate of the multigrid method is given by the following two steps:
\begin{itemize}
        \item \emph{Smoothing procedure:} Compute
              \begin{equation} \nonumber
                   \ul{x}^{(0,m)}_k := \ul{x}^{(0,m-1)}_k + \hat{\mcA}_k^{-1}
                                    \left(\ul{\bff}_k -\mcA_k\;\ul{x}^{(0,m-1)}_k\right)
                                    \qquad \mbox{for } m=1,\ldots,\nu
              \end{equation}
        with $\ul{x}^{(0,0)}_k=\ul{x}^{(0)}_k$. The choice of
        the smoother (or, in other words, of the preconditioning matrix
$\hat{\mcA}_k^{-1}$) will be discussed below. \vspace{.2cm}
        \item \emph{Coarse-grid correction:}
                \begin{itemize}
                     \item Compute the defect 
                        $\ul{r}_k^{(1)} := \ul{\bff}_k -\mcA_k\;\ul{x}^{(0,\nu)}_k$
                        and restrict it to grid level $k-1$ using
                        an restriction matrix $I_k^{k-1}$:
                        \begin{equation}\nonumber
                              \ul{r}_{k-1}^{(1)} := I_k^{k-1} \left(\ul{\bff}_k -\mcA_k
                              \;\ul{x}^{(0,\nu)}_k\right).
                        \end{equation}
                     \item Solve the coarse-grid problem
                        \begin{equation}\label{eq:coarse:grid:problem}
                            \mcA_{k-1} \,\ul{p}_{k-1}^{(1)} =\ul{r}_{k-1}^{(1)}
                        \end{equation}
                        approximatively.
                     \item Prolongate $\ul{p}_{k-1}$  to the
                          grid level $k$ using an prolongation 
                          matrix $I^k_{k-1}$ and add
                          the result to the previous iterate:
                          \begin{equation}\nonumber
                               \ul{x}_{k}^{(1)} := \ul{x}^{(0,\nu)}_k +
                                I_{k-1}^k \, \ul{p}_{k-1}^{(1)}.
                          \end{equation}
                \end{itemize}
\end{itemize}
As we have assumed to have nested spaces, the intergrid-transfer
matrices $I_{k-1}^k$ and $I_k^{k-1}$ can be chosen in
a canonical way:
$I_{k-1}^k$ is the canonical embedding and the restriction $I_k^{k-1}$
is its transpose.

If the problem on the coarser grid is solved exactly (two-grid
method), the coarse-grid correction is given by
\begin{equation} \label{eq:method:cga}
        \ul{x}_k^{(1)} := \ul{x}_k^{(0,\nu)} +
        I_{k-1}^{k} \, \mcA_{k-1}^{-1} \,  I_{k}^{k-1}
        \left( \ul{\bff}_k - \mcA_k \;\ul{x}_k^{(0,\nu)}\right).
\end{equation}
In practice the problem~\eqref{eq:coarse:grid:problem} is
approximatively solved by applying one step (V-cycle)
or two steps (W-cycle) of the multigrid method, recursively. On
the coarsest grid level ($k=0$) the problem~\eqref{eq:coarse:grid:problem} is 
solved exactly.

To construct a multigrid convergence result based on Hackbusch's
splitting of the analysis into smoothing property and approximation
property, we have to introduce an appropriate framework.

Convergence is shown on the spaces $X_k$, which are equipped with an $L^2$-like
norms $\hnorm\cdot\hnorm_{0,k}$, which are defined a follows:
\begin{equation*}
        \hnorm x_k\hnorm_{0,k}^2 :=\|\ul{x}_k\|_{\mcL_k}^2
        :=(\mcL_k \ul{x}_k,\ul{x}_k)_{\ell^2},
\end{equation*}
where
\begin{equation}\label{eq:defMcL}
        \mcL_k:=
        \left(
                \begin{array}{cccc}
                        \varphi_{\alpha,k} M_{U,k}\hspace{-.4cm}\mbox{} \\
                        &\alpha h_k^{-2} \varphi_{\alpha,k}^{-1}M_{P,k}\hspace{-.4cm}\mbox{}\\
                        &&\alpha^{-1} \varphi_{\alpha,k} M_{U,k}\hspace{-.4cm}\mbox{}\\
                        &&& h_k^{-2} \varphi_{\alpha,k}^{-1} M_{P,k}\ \\
                \end{array}
        \right),
\end{equation}
and $\varphi_{\alpha,k}:=1+\alpha^{1/2} h_k^{-2}$ and $M_{U,k}$ and $M_{P,k}$ are the mass matrices,
representing the $L^2$-inner product on $U_k$ and $P_k$, respectively.
Based on the norm $\hnorm \cdot\hnorm_{0,k}$, we can introduce the residual norm $\hnorm \cdot\hnorm_{2,k}$
using
\begin{equation*}
         \hnorm x_k \hnorm_{2,k} :=  \sup_{\tilde{x}_k\in X_k} 
                 \frac{\mcB \left(x_k, \tilde{x}_k\right)}{\hnorm \tilde{x}_k\hnorm_{0,k}}.
\end{equation*}

Smoothing property and approximation property read as follows.
\begin{itemize}
        \item \emph{Smoothing property:}
                \begin{equation} \label{eq:smp}
			\hnorm x_k^{(0,\nu)}-x_k^* \hnorm_{2,k}
                         \le \eta(\nu) \hnorm x_k^{(0)}-x_k^*\hnorm_{0,k}
                \end{equation}
                should hold for some function $\eta(\nu)$
                with $\lim_{\nu\rightarrow\infty}\eta(\nu)= 0$. Here and in what follows,
		$x_k^*\in X_k$ is the exact solution of the discretized problem~\eqref{eq:gen:bil:mv}.
        \item \emph{Approximation property:}
                \begin{equation} \label{eq:apprp}
                        \hnorm x_k^{(1)}-x_k^*\hnorm_{0,k} \le
                        C_A \hnorm x_k^{(0,\nu)}-x_k^*\hnorm_{2,k}
                \end{equation}        
                should hold for some constant $C_A>0$.
\end{itemize}
It is easy to see that, if we combine both conditions, we obtain
\begin{equation}\nonumber
                        \hnorm x_k^{(1)}-x_k^*\hnorm_{0,k} \le
                        q(\nu) \hnorm x_k^{(0)}-x_k^*\hnorm_{0,k},
\end{equation}        
where $q(\nu)=C_A\eta(\nu)$, i.e., that the two-grid method converges
for $\nu$ large enough.
The convergence of the W-cycle multigrid method can be shown under
mild assumptions, see e.g.~\cite{Hackbusch:1985}.

The choice of an appropriate smoother is a key issue in constructing
such a multigrid method.
Here, we introduce one smoother which is appropriate for a
large class of problems including the model problem:
\emph{normal equation smoothers}, cf.~\cite{Brenner:1996}, which read as follows.
\begin{equation}\nonumber
        \ul{x}^{(0,m)}_k := \ul{x}^{(0,m-1)}_k + \tau
                         \underbrace{\mcL_k^{-1} \mcA_k \mcL_k^{-1}}_{\displaystyle
\hat{\mcA}_k^{-1}:=}
                        \left(\ul{\bff}_k -\mcA_k \;\ul{x}^{(0,m-1)}_k\right)
                \qquad \mbox{for } m=1,\ldots,\nu.
\end{equation}
Here, a fixed~$\tau>0$ has to be chosen such that the
spectral radius~$\rho(\tau \hat{\mcA}_k^{-1}\mcA_k)$ is bounded away
from~$2$ on all grid
levels~$k$ and for all choices of the parameter~$\alpha$.

Using a standard inverse inequality, one can show that
\begin{equation*}
		\|x_k\|_{X} \les \hnorm x_k\hnorm_{0,k} 
\end{equation*}
is satisfied for all $x_k \in X_k$. Based on this result, using an eigenvalue 
analysis one can show the following lemma, cf.~\cite{Brenner:1996}.
\begin{lemma}
        The damping parameter $\tau>0$ can be chosen independent
        of grid level~$k$ and the choice of the parameter~$\alpha$ such that
        \begin{equation*}
                \tau\, \rho(\hat{\mcA}_k^{-1}\mcA_k) \le 2-\epsilon < 2,
        \end{equation*}
        holds for some constant $\epsilon>0$. For this choice of~$\tau$, there is a
        constant~$C_S>0$, independent of the grid level~$k$ and the choice of the
	parameter~$\alpha$,
        such that the smoothing property~\eqref{eq:smp} is satisfied with rate
        \begin{equation*}
                \eta(\nu):=C_S \nu^{-1/2}.
        \end{equation*}
\end{lemma}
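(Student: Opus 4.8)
The plan is to reduce this operator statement to a scalar spectral estimate by symmetrizing everything with respect to the inner product induced by $\mcL_k$. First I would introduce the transformation $\ul{w}_k := \mcL_k^{1/2}\ul{e}_k$, where $\ul{e}_k := \ul{x}_k^{(0)}-\ul{x}_k^*$ is the initial error, so that $\hnorm e_k\hnorm_{0,k}=\|\ul{w}_k\|_{\ell^2}$, and define the symmetric matrix $T_k := \mcL_k^{-1/2}\mcA_k\mcL_k^{-1/2}$ (symmetric because $\mcA_k$ and $\mcL_k$ are symmetric and $\mcL_k$ is positive definite). A short computation then shows that in these coordinates the smoother's error-propagation operator $(\mathrm{Id}-\tau\hat{\mcA}_k^{-1}\mcA_k)^\nu$ becomes $(\mathrm{Id}-\tau T_k^2)^\nu$ and that the residual norm satisfies $\hnorm e_k\hnorm_{2,k}=\|T_k\ul{w}_k\|_{\ell^2}$. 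Consequently the smoothing property~\eqref{eq:smp} is equivalent to the operator bound $\|T_k(\mathrm{Id}-\tau T_k^2)^\nu\|_{\ell^2}\le\eta(\nu)$.

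Next I would fix the damping parameter. Using the same substitution one checks that for the symmetric matrix $T_k$
\[
  \rho(T_k) = \nnsup{x_k}{X_k}\frac{|\mcB(x_k,x_k)|}{\hnorm x_k\hnorm_{0,k}^2}.
\]
The continuity (upper) bound contained in condition~\citecond{(A1)}, valid on all of $X\supseteq X_k$, gives $|\mcB(x_k,x_k)|\les\|x_k\|_X^2$, and combining this with the inverse inequality $\|x_k\|_X\les\hnorm x_k\hnorm_{0,k}$ stated just above the lemma yields $\rho(T_k)\le C_T$ with $C_T$ independent of $k$ and $\alpha$. Since $\hat{\mcA}_k^{-1}\mcA_k=\mcL_k^{-1}\mcA_k\mcL_k^{-1}\mcA_k$ is similar (via $\mcL_k^{1/2}$) to $T_k^2$, we obtain $\rho(\hat{\mcA}_k^{-1}\mcA_k)=\rho(T_k)^2\le C_T^2$, so the choice $\tau:=(2-\epsilon)\,C_T^{-2}$ secures $\tau\,\rho(\hat{\mcA}_k^{-1}\mcA_k)\le 2-\epsilon$ uniformly in $k$ and $\alpha$.

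Finally I would carry out the scalar estimate. As $T_k$ is symmetric, the spectral theorem reduces $\|T_k(\mathrm{Id}-\tau T_k^2)^\nu\|_{\ell^2}$ to $\sup_{\lambda\in\sigma(T_k)}|\lambda|\,|1-\tau\lambda^2|^\nu$. Substituting $s:=\tau\lambda^2\in[0,2-\epsilon]$, where the range follows from the spectral radius bound just fixed, this equals $\tau^{-1/2}\sup_{s\in[0,2-\epsilon]}\sqrt{s}\,|1-s|^\nu$. On $[1,2-\epsilon]$ the factor $|1-s|^\nu\le(1-\epsilon)^\nu$ decays geometrically, while on $[0,1]$ elementary calculus locates the maximum of $\sqrt{s}\,(1-s)^\nu$ at $s=(1+2\nu)^{-1}$, where the value is $\les\nu^{-1/2}$. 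This delivers $\eta(\nu)=C_S\nu^{-1/2}$ with $C_S:=\tau^{-1/2}C$, independent of $k$ and $\alpha$.

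The main obstacle is conceptual rather than computational: the KKT-system is indefinite, so $T_k$ has eigenvalues of both signs and the standard positive-definite smoothing argument does not apply directly. The normal-equation construction $\mcL_k^{-1}\mcA_k\mcL_k^{-1}$ circumvents this by acting through the positive semidefinite $T_k^2$; the price is that the residual norm measures with a single power of $T_k$ whereas the smoother works with $T_k^2$, which is precisely why the attainable rate is $\nu^{-1/2}$ rather than the $\nu^{-1}$ familiar from definite problems. Keeping every constant—in particular $C_T$ and the maximizing analysis—independent of $\alpha$ is the delicate point, and it is here that the carefully weighted definition of $\mcL_k$ in~\eqref{eq:defMcL} together with the $\alpha$-robust inverse inequality are essential.
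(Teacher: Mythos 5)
Your proposal is correct and takes essentially the same route as the paper, which states the lemma with the two ingredients you use and delegates the details to Brenner's normal-equation smoothing analysis: the inverse inequality $\|x_k\|_{X}\les\hnorm x_k\hnorm_{0,k}$ together with the boundedness part of \citecond{(A1)} gives the uniform bound on $\rho\bigl(\mcL_k^{-1/2}\mcA_k\mcL_k^{-1/2}\bigr)$, and the symmetric eigenvalue analysis of $\sqrt{s}\,|1-s|^\nu$ on $[0,2-\epsilon]$ delivers the $\nu^{-1/2}$ rate with constants independent of $k$ and $\alpha$. Your reconstruction of that eigenvalue analysis, including the similarity $\hat{\mcA}_k^{-1}\mcA_k\sim T_k^2$ and the maximization at $s=(1+2\nu)^{-1}$, is complete and accurate.
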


Certainly, the iteration procedure~\eqref{eq:smp} should be efficient-to-apply.
Using the fact, that the mass matrices $M_{U,k}$ and $M_{P,k}$ in~\eqref{eq:defMcL} and their diagonals are
spectrally equivalent under weak assumptions, for the practical
realization of the smoother these matrices can be replaced by their diagonals.

\section{A convergence proof}\label{sec:3a}

The proof of the approximation property is done using the
approximation theorem introduced in~\cite{Takacs:Zulehner:2012}
which requires besides the conditions~\citecond{(A1)}
and~\citecond{(A1a)} two more conditions (conditions~\citecond{(A3)} and 
\citecond{(A4)}) involving, besides the Hilbert space $X$, two more Hilbert
spaces $X_{-,k}:=(X_-,\|\cdot\|_{X_{-,k}})$ and
 $X_{+,k}:=(X_+,\|\cdot\|_{X_{+,k}})$, which are chosen as follows.

As weaker space, we choose $X_-:= Y_- \times
Y_-$, where $Y_-:=U_-\times P_-$, $U_-:=[L^2(\Omega)]^d$ and $P_-:= [H^1_0(\Omega)\cap L^2_0(\Omega)]^*$,
equipped with norms
\begin{align*}
        \|x\|_{X_{-,k}}^2 &:= \|(u,p,\lambda,\mu)\|_{X_{-,k}}^2 :=
\|(u,p)\|_{Y_{-,k}}^2 + \alpha^{-1} \|(\lambda,\mu)\|_{Y_{-,k}}^2,\\
        \|(u,p)\|_{Y_{-,k}}^2 &:= \|u\|_{U_{-,k}}^2 +\|p\|_{P_{-,k}}^2,\\
        \|u\|_{U_{-,k}}^2 &:=h_k^{-2} \|u\|_{[H^1_0(\Omega)+ \alpha^{-1/2} L^2(\Omega)]^*}^2 
		\mbox{ and}\\
        \|p\|_{P_{-,k}}^2 &:=h_k^{-2} \|p\|_{[\alpha^{-1} L^2_0(\Omega)\cap\alpha^{-1/2} H^1_0(\Omega)]^*}^2
\end{align*}
Note that dual spaces are $(X_-)^*:= (Y_-)^*\times (Y_-)^*$, where $(Y_-)^*=(U_-)^*\times (P_-)^*$, 
$(U_-)^*=[L^2(\Omega)]^d$ and $(P_-)^* = H^1_0(\Omega)\cap L^2_0(\Omega)$, equipped with norms
\begin{align*}
        \|\mcF\|_{(X_{-,k})^*}^2 &:= \|(f,g,\zeta,\chi)\|_{(X_{-,k})^*}^2 :=
\|(f,g)\|_{(Y_{-,k})^*}^2 + \alpha \|(\zeta,\chi)\|_{(Y_{-,k})^*}^2,\\
        \|(f,g)\|_{(Y_{-,k})^*}^2 &:= \|f\|_{(U_{-,k})^*}^2 +\|g\|_{(P_{-,k})^*}^2,\\
        \|f\|_{(U_{-,k})^*}^2 &:=h_k^{2} \|f\|_{H^1_0(\Omega)+ \alpha^{-1/2} L^2(\Omega)}^2 
		\mbox{ and}\\
        \|g\|_{(P_{-,k})^*}^2 &:=h_k^{2} \|g\|_{\alpha^{-1} L^2_0(\Omega)\cap\alpha^{-1/2} H^1_0(\Omega)}^2.
\end{align*}

As stronger space, we choose
$X_{+}:=Y_{+}\times Y_{+}$, where $Y_{+}:=U_{+}\times P_{+}$, $U_{+}:=[H^2(\Omega)\cap H^1_0(\Omega)]^d$ and
$P_{+}:=H^1(\Omega)\cap L^2_0(\Omega)$, equipped with norms
\begin{align*}
        \|x\|_{X_{+,k}}^2 &:= \|(u,p,\lambda,\mu)\|_{X_{+,k}}^2 :=
\|(u,p)\|_{Y_{+,k}}^2 + \alpha^{-1} \|(\lambda,\mu)\|_{Y_{+,k}}^2,\\
        \|(u,p)\|_{Y_{+,k}}^2 &:= \|u\|_{U_{+,k}}^2+ \|p\|_{P_{+,k}}^2,\\
        \|u\|_{U_{+,k}}^2 &:= h_k^2 \|u\|_{H^1(\Omega)\cap \alpha^{1/2} H^2(\Omega)}^2
	\mbox{ and}\\
        \|p\|_{P_{+,k}}^2 &:= h_k^2 \|p\|_{\alpha H^2(\Omega)+\alpha^{1/2} H^1(\Omega)}^2.
\end{align*}
The additional conditions read as follows.
\begin{description}
                \item[\citecond{(A3)}] On all grid levels $k$, the approximation error
                result
                \begin{equation*}
                        \inf_{x_k\in X_k} \|x-x_k\|_X \les \| x \|_{X_{+,k}}\qquad \mbox{for all } x\in X_+
                \end{equation*}
                is satisfied.
                \item[\citecond{(A4)}] For all grid levels $k$, all $\mcF\in (X_-)^*$ the solution
                        $x_{\mcF}\in X$ of the problem, 
                        \begin{equation}\label{a4:problem}
                               \mbox{find $x\in X$ such that} \qquad \mcB(x,\tilde{x}) = \mcF(\tilde{x}) \qquad\mbox{ for all } \tilde{x}\in X,
                        \end{equation}
                        satisfies $x_{\mcF}\in X_+$ and the inequality
                        \begin{equation}\label{eq:a4}
                                \|x_{\mcF}\|_{X_{+,k}} \les \|\mcF\|_{(X_{-,k})^*}.
                        \end{equation}
\end{description}

Based on these assumptions, the following theorem shows the approximation property.
\begin{theorem}\label{thrm:main}
        Let for $k=0,1,2,\ldots$ the symmetric matrices $\mcA_k$ be obtained by discretizing 
	problem~\eqref{eq:galerkin} using a sequence of finite-dimensional nested
        subspaces $X_{k-1}\subseteq X_k\subset X$.
        Assume that there are Hilbert spaces $X_+\subseteq X\subseteq X_-$
        with mesh-dependent norms ${\|\cdot\|_{X_{+,k}}}$, ${\|\cdot\|_{X}}$
        and ${\|\cdot\|_{X_{-,k}}}$ such that the conditions~\citecond{(A1)},
	\citecond{(A1a)}, \citecond{(A3)} and~\citecond{(A4)} are satisfied.
        Then the coarse-grid correction~\eqref{eq:method:cga} satisfies the approximation 
				property
	      \begin{equation} \label{eq:apprp:thrm1}
                        \| x_k^{(1)}-x_k^*\|_{X_{-,k}} \le
                        C_A \sup_{\tilde{x}_k\in X_k} 
                        \frac{\mcB\left(
				x_k^{(0,\nu)}-x_k^*,\tilde{x}_k\right)}
				{\|\tilde{x}_k\|_{X_{-,k}}},
        \end{equation}     
	where the constant $C_A$ only depends on the constants 
	that appear (implicitly) in the named conditions.
\end{theorem}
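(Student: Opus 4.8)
The plan is to reduce the statement to the standard Galerkin orthogonality of the two-grid error together with a duality (Aubin--Nitsche-type) argument built on conditions~\citecond{(A3)} and~\citecond{(A4)}. First I would rewrite the coarse-grid correction~\eqref{eq:method:cga} in variational form. Writing $w_k := x_k^{(0,\nu)}-x_k^*$ for the error after smoothing and using $\mcF(\tilde{x}_k)=\mcB(x_k^*,\tilde{x}_k)$, the residual represents the functional $-\mcB(w_k,\cdot)$; since $I_k^{k-1}$ is the restriction, the exactly solved coarse problem~\eqref{eq:coarse:grid:problem} delivers $p_{k-1}\in X_{k-1}$ with $\mcB(p_{k-1},v_{k-1})=-\mcB(w_k,v_{k-1})$ for all $v_{k-1}\in X_{k-1}$. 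Hence $z := x_k^{(1)}-x_k^* = w_k + p_{k-1}$ satisfies the Galerkin orthogonality $\mcB(z,v_{k-1})=0$ for all $v_{k-1}\in X_{k-1}$. Throughout I use that $\mcB$ is symmetric (the matrices $\mcA_k$ are symmetric) and that $X_{k-1}\subseteq X_k$, so that the canonical intergrid transfers act as an embedding and its adjoint.

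Next I would estimate $\|z\|_{X_{-,k}}$ by duality. As $X_{-,k}$ is a Hilbert space, $\|z\|_{X_{-,k}}=\sup_{0\not=\mcG\in (X_{-,k})^*}\langle\mcG,z\rangle/\|\mcG\|_{(X_{-,k})^*}$. For fixed $\mcG$, condition~\citecond{(A4)} provides $\phi\in X_+$ with $\mcB(\phi,\tilde{x})=\langle\mcG,\tilde{x}\rangle$ for all $\tilde{x}\in X$ and $\|\phi\|_{X_{+,k}}\les\|\mcG\|_{(X_{-,k})^*}$. Introducing the $\mcB$-Galerkin projections $\phi_{k-1}:=\Pi_{k-1}\phi\in X_{k-1}$ and $\phi_k:=\Pi_k\phi\in X_k$ (well defined and quasi-optimal by~\citecond{(A1)} and~\citecond{(A1a)}), symmetry of $\mcB$ gives $\mcB(v_{k-1},\phi-\phi_{k-1})=0$ and $\mcB(v_k,\phi-\phi_k)=0$ for $v_{k-1}\in X_{k-1}$, $v_k\in X_k$. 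Using these together with the Galerkin orthogonality of $z$, with $\mcB(p_{k-1},\phi-\phi_{k-1})=0$, and with $z=w_k+p_{k-1}$, the pairing collapses to a purely discrete quantity:
\[
\langle\mcG,z\rangle=\mcB(z,\phi)=\mcB(z,\phi-\phi_{k-1})=\mcB(w_k,\phi-\phi_{k-1})=\mcB(w_k,\phi_k-\phi_{k-1}).
\]
Since $\phi_k-\phi_{k-1}\in X_k$, the definition of the right-hand side of~\eqref{eq:apprp:thrm1} yields $\langle\mcG,z\rangle\le R\,\|\phi_k-\phi_{k-1}\|_{X_{-,k}}$, where $R:=\sup_{\tilde{x}_k\in X_k}\mcB(w_k,\tilde{x}_k)/\|\tilde{x}_k\|_{X_{-,k}}$ is exactly the supremum appearing in the claim.

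It then remains to prove the key approximation estimate $\|\phi_k-\phi_{k-1}\|_{X_{-,k}}\les\|\phi\|_{X_{+,k}}$; combining it with~\citecond{(A4)} and taking the supremum over $\mcG$ closes the proof, with $C_A$ depending only on the constants in~\citecond{(A1)}, \citecond{(A1a)}, \citecond{(A3)} and~\citecond{(A4)}. I expect this last estimate to be the main obstacle. The difficulty is that $\|\cdot\|_{X_{-,k}}$ is a mesh-dependent, $L^2$-like norm that, on the discrete space, is in fact \emph{stronger} than $\|\cdot\|_X$ (an inverse inequality gives only $\|v_k\|_X\les\|v_k\|_{X_{-,k}}$), so the naive route of invoking~\citecond{(A3)} in the $\|\cdot\|_X$-norm loses a factor $h_k^{-1}$ and is not robust. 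Instead I would split $\phi_k-\phi_{k-1}=(\phi-\phi_{k-1})-(\phi-\phi_k)$ and estimate each Galerkin error directly in $\|\cdot\|_{X_{-,k}}$: the weak (dual-type) components of that norm each carry a factor $h_k^{-2}$, which must be compensated by the extra powers of $h_k$ gained from Aubin--Nitsche duality applied to the quasi-optimal projections. The delicate bookkeeping is to track the weights $\alpha^{\pm1/2}$ and $\varphi_{\alpha,k}=1+\alpha^{1/2}h_k^{-2}$, and to use $h_{k-1}\eqsim h_k$ from uniform refinement, so that every power of $h_k$ and of $\alpha$ cancels against the corresponding power hidden in $\|\phi\|_{X_{+,k}}$; this cancellation is precisely what renders the constant $C_A$ independent of both $k$ and $\alpha$.
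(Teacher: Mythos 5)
Your reduction of the theorem to the single estimate $\|\phi_k-\phi_{k-1}\|_{X_{-,k}}\les\|\phi\|_{X_{+,k}}$ is correct and cleanly argued: the identification of the coarse-grid correction with $p_{k-1}=-\Pi_{k-1}w_k$ (where $\Pi_j$ denotes the $\mcB$-Galerkin projection onto $X_j$, well defined by \citecond{(A1a)}), the resulting Galerkin orthogonality of $z=x_k^{(1)}-x_k^*$, the dualization of $\|z\|_{X_{-,k}}$ through \citecond{(A4)} (legitimate because $\mcB$ is symmetric), and the collapse $\langle\mcG,z\rangle=\mcB(w_k,\phi_k-\phi_{k-1})\le R\,\|\phi_k-\phi_{k-1}\|_{X_{-,k}}$ are all valid. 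Note that the paper itself contains no proof of this theorem: it is quoted from Theorem~4.1 of \cite{Takacs:Zulehner:2012}, where the argument is organized around an extension of the discrete residual functional to $(X_-)^*$ and a triangle inequality over the two Galerkin projections of the resulting continuous solution, rather than around the dual solution $\phi$; both organizations, however, rest on the same type of final estimate that you isolate, namely Galerkin errors on two consecutive levels measured in $\|\cdot\|_{X_{-,k}}$ and bounded by the $\|\cdot\|_{X_{+,k}}$-norm.

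Precisely that estimate --- which you yourself call the main obstacle --- is left unproven, and this is the genuine gap; moreover, your plan for closing it points in the wrong direction. You propose to track the weights $\alpha^{\pm1/2}$ and $\varphi_{\alpha,k}=1+\alpha^{1/2}h_k^{-2}$ through the concrete norms, but $\varphi_{\alpha,k}$ does not occur in $\|\cdot\|_{X_{-,k}}$ at all (it belongs to $\hnorm\cdot\hnorm_{0,k}$, which enters only after this theorem, via Lemma~\ref{lem:equiv}), and no weight-tracking is needed: the estimate follows purely abstractly by one more pass of the duality machinery you already used. Indeed, for $v\in X_+$ and $j\in\{k-1,k\}$,
\begin{align*}
\|v-\Pi_j v\|_{X_{-,j}}
&=\nnsup{\mcG'}{(X_{-,j})^*}\frac{\langle\mcG',v-\Pi_j v\rangle}{\|\mcG'\|_{(X_{-,j})^*}}
=\nnsup{\mcG'}{(X_{-,j})^*}\frac{\mcB\left(v-\Pi_j v,\,\psi_{\mcG'}-\Pi_j\psi_{\mcG'}\right)}{\|\mcG'\|_{(X_{-,j})^*}}\\
&\les\nnsup{\mcG'}{(X_{-,j})^*}\frac{\|v-\Pi_j v\|_{X}\;\|\psi_{\mcG'}-\Pi_j\psi_{\mcG'}\|_{X}}{\|\mcG'\|_{(X_{-,j})^*}}
\les\|v\|_{X_{+,j}},
\end{align*}
where $\psi_{\mcG'}\in X_+$ is the solution provided by \citecond{(A4)}; the second equality uses the equation defining $\psi_{\mcG'}$, the symmetry of $\mcB$, and the Galerkin orthogonality of $v-\Pi_j v$ against $X_j$; the first inequality is the boundedness of $\mcB$ contained in \citecond{(A1)}; and the last step is quasi-optimality (Cea's lemma, from \citecond{(A1a)} and \citecond{(A1)}) combined with \citecond{(A3)} for both $v$ and $\psi_{\mcG'}$, followed by \citecond{(A4)} for $\psi_{\mcG'}$. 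Applying this with $v=\phi$ for $j=k$ and $j=k-1$ and using the triangle inequality gives your key estimate; for $j=k-1$ one additionally needs $\|\cdot\|_{X_{-,k-1}}\eqsim\|\cdot\|_{X_{-,k}}$ and $\|\cdot\|_{X_{+,k-1}}\eqsim\|\cdot\|_{X_{+,k}}$, which hold because the mesh enters these norms only through fixed powers of $h$ and $h_{k-1}\eqsim h_k$ under uniform refinement. This is the only place where any mesh- or $\alpha$-dependence has to be checked; all other robustness is already encoded in the constants implicit in \citecond{(A1)}--\citecond{(A4)}, which is exactly the point of the abstract framework. With this lemma inserted, your proof is complete and yields~\eqref{eq:apprp:thrm1} with $C_A$ depending only on the constants in the named conditions.
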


For a proof, see~\cite{Takacs:Zulehner:2012}, Theorem~4.1.

\begin{theorem}
        Condition~\citecond{(A3)} is satisfied.
\end{theorem}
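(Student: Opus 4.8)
The plan is to exploit the product structure together with the fact that the norm $\|\cdot\|_X$ contains no cross terms between the four blocks $u,p,\lambda,\mu$. Since $X_k=Y_k\times Y_k$ with $Y_k=U_k\times P_k$, and $\lambda\in U$, $\mu\in P$ are discretized exactly as $u,p$, the infimum over $X_k$ decouples block by block,
\begin{equation*}
\inf_{x_k\in X_k}\|x-x_k\|_X^2 = \inf_{u_k\in U_k}\|u-u_k\|_U^2 + \inf_{p_k\in P_k}\|p-p_k\|_P^2 + \alpha^{-1}\Big(\inf_{\lambda_k\in U_k}\|\lambda-\lambda_k\|_U^2 + \inf_{\mu_k\in P_k}\|\mu-\mu_k\|_P^2\Big).
\end{equation*}
Because $\|\cdot\|_{X_{+,k}}$ has exactly the same block-diagonal form with the same $\alpha$-weights, it suffices to prove two component-wise estimates, a velocity-type and a pressure-type one, and then sum them.

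For the velocity part I would fix a quasi-interpolation operator $\Pi_k\colon U\to U_k$ of Scott--Zhang type, preserving the homogeneous boundary conditions and satisfying simultaneously the two standard estimates $\|u-\Pi_k u\|_{L^2(\Omega)}\les h_k\|u\|_{H^1(\Omega)}$ and $\|u-\Pi_k u\|_{H^1(\Omega)}\les h_k\|u\|_{H^2(\Omega)}$. Recalling $\|u\|_U^2=\|u\|_{L^2(\Omega)}^2+\alpha^{1/2}\|u\|_{H^1(\Omega)}^2$ and applying the two estimates to the respective terms gives, for $u\in U_+$,
\begin{equation*}
\inf_{u_k\in U_k}\|u-u_k\|_U^2 \le \|u-\Pi_k u\|_U^2 \les h_k^2\big(\|u\|_{H^1(\Omega)}^2+\alpha^{1/2}\|u\|_{H^2(\Omega)}^2\big)=\|u\|_{U_{+,k}}^2,
\end{equation*}
which is precisely the required velocity estimate.

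For the pressure part I would first invoke Lemma~\ref{lem:pe} (using condition~\citecond{(R)}) to replace $\|\cdot\|_P$ by the equivalent sum-space norm $\|\cdot\|_{\alpha H^1(\Omega)+\alpha^{1/2}L^2(\Omega)}$. The key idea is to match this against the sum-space structure of $\|\cdot\|_{P_{+,k}}=h_k\|\cdot\|_{\alpha H^2(\Omega)+\alpha^{1/2}H^1(\Omega)}$: for any splitting $p=r_1+r_2$ with $r_1$ measured in $\alpha H^2(\Omega)$ and $r_2$ in $\alpha^{1/2}H^1(\Omega)$, I would apply the $H^1$-conforming pressure interpolant $\Pi_k$ on $P_k$ (available by condition~\citecond{(S)}) to each piece and set $p_k=\Pi_k r_1+\Pi_k r_2$, so that $p-p_k=(r_1-\Pi_k r_1)+(r_2-\Pi_k r_2)$ is an admissible decomposition in $\alpha H^1(\Omega)+\alpha^{1/2}L^2(\Omega)$. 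The estimates $\|r_1-\Pi_k r_1\|_{H^1(\Omega)}\les h_k\|r_1\|_{H^2(\Omega)}$ and $\|r_2-\Pi_k r_2\|_{L^2(\Omega)}\les h_k\|r_2\|_{H^1(\Omega)}$ then bound $\|p-p_k\|_{\alpha H^1+\alpha^{1/2}L^2}^2$ by $h_k^2\big(\alpha\|r_1\|_{H^2(\Omega)}^2+\alpha^{1/2}\|r_2\|_{H^1(\Omega)}^2\big)$, and taking the infimum over all such splittings yields $\inf_{p_k\in P_k}\|p-p_k\|_P^2\les\|p\|_{P_{+,k}}^2$. Feeding the velocity estimate into the $u$- and $\lambda$-blocks and the pressure estimate into the $p$- and $\mu$-blocks of the decoupling identity, and summing with the $\alpha^{-1}$ weights, reproduces $\|x\|_{X_{+,k}}^2$ on the right and proves~\citecond{(A3)}.

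I expect the main obstacle to lie entirely in the pressure part: one must respect the mean-value-zero constraint $P_k\subseteq L^2_0(\Omega)$, so the interpolant of each piece may require a mean-value correction whose error has to be reabsorbed into the same bounds, and one must carefully align the sum-space decomposition implicit in $\|\cdot\|_{P_{+,k}}$ with the sum-space norm delivered by Lemma~\ref{lem:pe}. The velocity part, by contrast, is a routine simultaneous-order interpolation estimate.
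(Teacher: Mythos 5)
Your proposal is correct and follows essentially the same route as the paper's own proof: block-wise decoupling of the infimum, simultaneous $L^2$/$H^1$ interpolation estimates for the velocity blocks, and for the pressure blocks the norm equivalence of Lemma~\ref{lem:pe} combined with interpolating each piece of a sum-space decomposition $p=r_1+r_2$ (the paper phrases this step as an exchange of infima, which is the same argument). Your added care about the mean-value-zero constraint on $P_k$ is a genuine technical detail that the paper passes over silently, but it does not alter the approach.
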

\begin{proof}
	This proof is analogous to the proof of Theorem~4.2 in~\cite{Takacs:2013}. However, to keep
	this paper as self-contained as possible, we give a proof of this theorem.

  Note that it suffices
	to show approximation error results for the individual variables separately.
	Using a standard interpolation operator $\Pi_k:[H^2(\Omega)]^d\rightarrow U_k$, 
	we obtain for the velocity field~$u$
        \begin{equation*}
		\|u - \Pi_k u\|_{L^2(\Omega)}^2 \les h_k^2 \|u \|_{H^1(\Omega)}^2
		\quad\mbox{and}\quad
		\|u - \Pi_k u\|_{H^1(\Omega)}^2 \les h_k^2 \|u \|_{H^2(\Omega)}^2,
        \end{equation*}
	for all $u\in [H^2(\Omega)]^d$ and therefore
        \begin{align*}
		& \inf_{u_k\in U_k} \|u-u_k\|_{U}^2 \le \|u-\Pi_ku\|_{U}^2
                =\|u-\Pi_ku\|_{L^2(\Omega)}^2+\alpha^{1/2} \|u-\Pi_ku\|_{H^1(\Omega)}^2
		\\&\qquad \les h_k^2 \left(\|u\|_{H^1(\Omega)}^2+ \alpha^{1/2}\|u\|_{H^2(\Omega)}^2\right)
                =  \|u\|_{U_{+,k}}^2.
        \end{align*}
	The same can be done for the adjoined velocity~$\lambda$.
	Also for the pressure distribution~$p$ we can do a similar estimate. The estimates
        \begin{equation*}
		\inf_{p_k\in P_k} \|p - p_k\|_{L^2(\Omega)}^2 \les h_k^2 \|p\|_{H^1(\Omega)}^2
		\quad \mbox{and} \quad
		\inf_{p_k\in P_k} \|p - p_k\|_{H^1(\Omega)}^2 \les h_k^2 \|p\|_{H^2(\Omega)}^2
        \end{equation*}
	are standard approximation error results which imply
        \begin{align*}
                &\inf_{p_k\in P_k} \|p-p_k\|_P^2 = \inf_{p_k\in P_k} \|p-p_k\|_{\alpha H^1(\Omega)+\alpha^{1/2} L^2(\Omega)}^2 \\
           &\quad=
			\inf_{\substack{p_k\in P_k\\q_1\in H^1(\Omega)\\q_2\in L^2(\Omega)\\q_1+q_2=p-p_k}} \|q_1\|_{\alpha H^1(\Omega)}^2
+\|q_2\|_{\alpha^{1/2} L^2(\Omega)}^2\\
			&\quad=
			\inf_{\substack{p_1\in H^1(\Omega)\\p_2\in L^2(\Omega)\\p_1+p_2=p}}
			\inf_{p_{1,k}\in P_k} \|p_1-p_{1,k}\|_{\alpha H^1(\Omega)}^2
			+\inf_{p_{2,k}\in P_k} \|p_2-p_{2,k}\|_{\alpha^{1/2} L^2(\Omega)}^2\\
			&\quad\le
			\inf_{\substack{p_1\in H^2(\Omega)\\p_2\in H^1(\Omega)\\p_1+p_2=p}}
			\inf_{p_{1,k}\in P_k} \|p_1-p_{1,k}\|_{\alpha H^1(\Omega)}^2
			+\inf_{p_{2,k}\in P_k} \|p_2-p_{2,k}\|_{\alpha^{1/2} L^2(\Omega)}^2\\
			&\quad\les h_k^2
			\inf_{\substack{p_1\in H^2(\Omega)\\p_2\in H^1(\Omega)\\p_1+p_2=p}}
			\|p_1\|_{\alpha H^2(\Omega)}^2
			+ \|p_2\|_{\alpha^{1/2} H^1(\Omega)}^2
		= h_k^2 \|p\|_{\alpha H^2(\Omega)+\alpha^{1/2} H^1(\Omega)}^2.
        \end{align*}
	The same can be done for the adjoined pressure~$\mu$. This finishes the proof.
\qed\end{proof}

For showing~\citecond{(A4)}, we recall Theorem~4.6 
in~\cite{Takacs:2013} on the regularity of the generalized Stokes problem. For this
purpose, we need a regularity assumption for the
Poisson problem with homogeneous Neumann boundary conditions.
\begin{description}
        \item[\citecond{(R1)}] \emph{Regularity of the Poisson problem.}
        Let $g\in L^2(\Omega)$ and $p\in H^1(\Omega)\cap L^2_0(\Omega)$ be such that
        \begin{equation*}
          (\nabla p,\nabla \tilde{p})_{H^1(\Omega)} = (g,\tilde{p})_{L^2(\Omega)} \mbox{ for all }\tilde{p} \in H^1(\Omega)\cap L^2_0(\Omega).
        \end{equation*}
        Then $p\in H^2(\Omega)$ and
        $
                \|p\|_{H^2(\Omega)} \les \|g\|_{L^2(\Omega)}
        $.
\end{description}
Such a regularity assumption can be guaranteed for convex polygonal domains (see, e.g.,~\cite{Dauge:1988}).

Theorem~4.6 in~\cite{Takacs:2013} directly implies the following theorem.
\begin{theorem}\label{thrm:td:stokes}
	Suppose that the regularity assumptions~\citecond{(R)} and~\citecond{(R1)} are satisfied.
	Let $f \in [L^2(\Omega)]^d$ and $g\in H^1_0(\Omega)\cap L^2_0(\Omega)$. 
	The solution of the problem, find $(u,p) \in Y$ such that
	\begin{align*}
		\alpha^{-1/2} ( u,\tilde{u})_{L^2(\Omega)} 
	    + (\nabla u,\nabla\tilde{u})_{L^2(\Omega)} 
			+(p,\nabla\cdot\tilde{u})_{L^2(\Omega)}
			&=(f,\tilde{u})_{L^2(\Omega)}\\
	          (\nabla\cdot u,\tilde{p})_{L^2(\Omega)} 
			&= (g,\tilde{p})_{L^2(\Omega)}\
	\end{align*}
	for all $(\tilde{u},\tilde{p}) \in Y$, satisfies $(u,p)\in Y_{+}$ and the inequality
	\begin{equation*}
		\begin{aligned}
			&\|u\|_{\alpha^{-1/2} H^1(\Omega)\cap H^2(\Omega) }^2
			+ \|p\|_{\alpha^{1/2} H^2(\Omega)+H^1(\Omega)}^2\\
			&\qquad\les
			\|f\|_{\alpha^{1/2} H^1_0(\Omega)+ L^2(\Omega) }^2
			+ \|g\|_{\alpha^{-1/2} L^2_0(\Omega)\cap H^1_0(\Omega)}^2
		\end{aligned}
	\end{equation*}
	is satisfied.
\end{theorem}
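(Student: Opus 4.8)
The plan is to treat the reaction term $\alpha^{-1/2}u$ as a lower-order perturbation of the classical Stokes operator and to reduce the whole statement to the two regularity results that are available, namely the Stokes regularity~\citecond{(R)} and the Neumann--Poisson regularity~\citecond{(R1)}. The only real content is to carry out this reduction with constants that are \emph{uniform in $\alpha$}: the weighted ``$\cap$'' and ``$+$'' norms appearing in the statement are designed precisely to absorb the powers of $\alpha^{\pm 1/2}$ that a naive perturbation argument would otherwise leave behind, so the whole game is to route each contribution into the correct weighted space.

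First I would establish well-posedness together with the energy estimate. The bilinear form $a(u,\tilde u):=\alpha^{-1/2}(u,\tilde u)_{L^2(\Omega)}+(\nabla u,\nabla\tilde u)_{L^2(\Omega)}$ is coercive on $U=[H^1_0(\Omega)]^d$ uniformly in $\alpha>0$, since the diffusion term alone already controls the full $H^1$-norm via Poincar\'e, and the divergence coupling $(\nabla\cdot u,q)_{L^2(\Omega)}$ satisfies the inf-sup condition between $U$ and $P$. Brezzi's theory then yields a unique solution $(u,p)\in Y$, and testing the momentum equation with $\tilde u=u$ while using $\nabla\cdot u=g$ gives the energy bound $\alpha^{-1/2}\|u\|_{L^2(\Omega)}^2+\|\nabla u\|_{L^2(\Omega)}^2\les\ldots$ together with a dual inf-sup bound for $\|p\|_{L^2(\Omega)}$; this supplies the low-order half of the target norm. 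To extract second-order regularity I would then move the reaction term to the right-hand side, so that $(u,p)$ solves the \emph{plain} Stokes system with data $(f-\alpha^{-1/2}u,\,g)$. Condition~\citecond{(R)} yields $\|u\|_{H^2(\Omega)}^2+\|p\|_{H^1(\Omega)}^2\les\|f-\alpha^{-1/2}u\|_{L^2(\Omega)}^2+\|g\|_{H^1(\Omega)}^2$, and the perturbation $\alpha^{-1/2}u$ is reabsorbed using the energy estimate just obtained.

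To reach the stronger space $P_+=H^1(\Omega)\cap L^2_0(\Omega)$ with its $\alpha^{1/2}H^2+H^1$ norm I would produce the missing $H^2$-regularity of the pressure by taking the divergence of the momentum equation: with $\nabla\cdot u=g$ this becomes a Poisson problem $\Delta p=\nabla\cdot f+\Delta g-\alpha^{-1/2}g$ carrying a Neumann-type boundary condition, to which~\citecond{(R1)} applies. To land in the \emph{sum} and \emph{intersection} spaces robustly, I would split the data along the two constituents of each norm, writing $f=f_1+f_2$ with $f_1\in\alpha^{1/2}H^1_0(\Omega)$ and $f_2\in L^2(\Omega)$ and measuring $g$ in $\alpha^{-1/2}L^2_0(\Omega)\cap H^1_0(\Omega)$, solve the corresponding sub-problems, and recombine by linearity while decomposing $p=p_1+p_2$. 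The natural device for making every constant $\alpha$-independent is the diffusion--reaction balance length $\ell\eqsim\alpha^{1/4}$, at which $-\Delta$ and $\alpha^{-1/2}$ are of the same order: rescaling the estimates to their scale-invariant, top-order form before re-adding the lower-order terms makes the weights line up with those in the statement.

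\emph{The main obstacle} is exactly this uniform-in-$\alpha$ bookkeeping. Both the large-$\alpha$ regime, where the problem degenerates to pure Stokes and the reaction term is negligible, and the small-$\alpha$ regime, where the reaction term dominates and forces the characteristic length $\alpha^{1/4}$, must be covered by a single chain of inequalities with one constant. A direct perturbation argument or a single application of~\citecond{(R)} invariably produces stray factors of $\alpha^{\pm1/2}$; avoiding them requires choosing the data and pressure splittings, and the order in which~\citecond{(R)} and~\citecond{(R1)} are invoked, so that each term is absorbed into precisely the weighted norm in which it is bounded. This is where I expect most of the effort to lie, and it is the reason the estimate is cleanest to quote from~\cite{Takacs:2013} rather than re-derive from the classical shift theorems alone.
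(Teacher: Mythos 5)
Your proposal is a plan rather than a proof, and its decisive step is explicitly deferred. The paper proves this theorem by pure citation: Theorem~4.6 in~\cite{Takacs:2013} \emph{is} this statement for the generalized Stokes problem with reaction coefficient $\beta$, and the paper's entire proof is the observation that one may take $\beta:=\alpha^{-1/2}$. You instead set out to re-derive that theorem from \citecond{(R)} and \citecond{(R1)} --- legitimate in principle, since those are exactly the hypotheses under which the cited theorem holds --- but you stop precisely where the mathematical content begins: you yourself identify the uniform-in-$\alpha$ bookkeeping as ``the main obstacle'' and ``where I expect most of the effort to lie,'' and you close by recommending that the estimate be quoted from~\cite{Takacs:2013}. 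A derivation whose central difficulty is acknowledged but not carried out is a gap, not a proof; and the fallback you name is exactly the paper's proof, so as written your proposal establishes nothing beyond it.

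Two of the steps you do sketch would, moreover, need genuine repair if executed. First, the perturbation step ``apply \citecond{(R)} to the data $(f-\alpha^{-1/2}u,\,g)$ and reabsorb via the energy estimate'' produces the term $\alpha^{-1}\|u\|_{L^2(\Omega)}^2$ on the right-hand side, while testing with $\tilde{u}=u$ controls only $\alpha^{-1/2}\|u\|_{L^2(\Omega)}^2$ plus the coupling term $(p,g)_{L^2(\Omega)}$; the missing factor $\alpha^{-1/2}$ then multiplies a pressure--datum pairing whose $\alpha$-uniform control is itself part of what is to be proved, so the absorption does not visibly close, and for $\alpha\rightarrow 0$ the reaction term is in no sense a small perturbation. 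Second, the pressure step ``take the divergence and apply \citecond{(R1)}'' is purely formal: for $f\in[L^2(\Omega)]^d$ one only has $\nabla\cdot f\in H^{-1}(\Omega)$, whereas the datum in \citecond{(R1)} must lie in $L^2(\Omega)$, so the splitting $f=f_1+f_2$ must be performed \emph{before} this step can even be stated; and the weak homogeneous Neumann formulation for $p$ cannot be obtained by testing the momentum equation with $\tilde{u}=\nabla\tilde{p}$, since $\nabla\tilde{p}\notin[H^1_0(\Omega)]^d$ --- justifying it requires an approximation argument of the kind the paper deploys elsewhere (the operators $\nabla^{\epsilon}$, $\Delta^{\epsilon}$ mapping into $[H^1_0(\Omega)]^d$ in Lemma~\ref{lem:aux2}). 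These two points are exactly where the weighted sum and intersection norms have to earn their keep, and they are the points your sketch leaves open.
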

\begin{proof}
	We choose the parameter $\beta$ (which occurs in~\cite{Takacs:2013})
	to be $\beta:=\alpha^{-1/2}$.
\qed\end{proof}

\begin{lemma}\label{lem:step2}
        Suppose that assumptions~\citecond{(R)} and~\citecond{(R1)} are satisfied. Let
        $\mcF\in (X_-)^*$ be arbitrarily but fixed.
	Then, $x_{\mcF}$, the solution of~\eqref{a4:problem},
	satisfies $x_{\mcF}\in X_{+}$ and the bound
	\begin{equation}\label{eq:aux1}
		\|x_{\mcF} \|_{X_{+,k}}^2 \les\|\mcF\|_{(X_{-,k})^*}^2
		+ h_k^2 \left( \|u_{\mcF} \|_{ H^1(\Omega) }^2 + \alpha^{-1} \|\lambda_{\mcF}\|_{ H^1(\Omega) }^2\right).
	\end{equation}
\end{lemma}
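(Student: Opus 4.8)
The plan is to exploit the block structure of the reduced KKT system. Writing $\mcF=(f,g,\zeta,\chi)$ and testing the variational problem~\eqref{a4:problem} separately against $(\tilde{u},\tilde{p})$ and against $(\tilde{\lambda},\tilde{\mu})$, I would observe that the system decouples into two Stokes problems. The first is a problem for $(u,p)$ with test functions $(\tilde{\lambda},\tilde{\mu})$,
\begin{align*}
(\nabla u,\nabla\tilde{\lambda})_{L^2(\Omega)}+(p,\nabla\cdot\tilde{\lambda})_{L^2(\Omega)}&=\zeta(\tilde{\lambda})+\alpha^{-1}(\lambda,\tilde{\lambda})_{L^2(\Omega)},\\
(\nabla\cdot u,\tilde{\mu})_{L^2(\Omega)}&=\chi(\tilde{\mu}),
\end{align*}
and the second is a problem for $(\lambda,\mu)$ with test functions $(\tilde{u},\tilde{p})$,
\begin{align*}
(\nabla\lambda,\nabla\tilde{u})_{L^2(\Omega)}+(\mu,\nabla\cdot\tilde{u})_{L^2(\Omega)}&=f(\tilde{u})-(u,\tilde{u})_{L^2(\Omega)},\\
(\nabla\cdot\lambda,\tilde{p})_{L^2(\Omega)}&=g(\tilde{p}),
\end{align*}
where in each case the coupling variable ($\lambda$, respectively $u$) is treated as known data on the right-hand side.

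Neither subproblem carries the reaction term $\alpha^{-1/2}(\cdot,\cdot)_{L^2(\Omega)}$ required by Theorem~\ref{thrm:td:stokes}, so the key step is to add and subtract it. I would rewrite the first subproblem as a generalized Stokes problem for $(u,p)$ with momentum datum $\zeta+\alpha^{-1}\lambda+\alpha^{-1/2}u$ and continuity datum $\chi$, and the second as a generalized Stokes problem for $(\lambda,\mu)$ with momentum datum $f-u+\alpha^{-1/2}\lambda$ and continuity datum $g$. Since $\zeta,f\in[L^2(\Omega)]^d$ and $u,\lambda\in[H^1_0(\Omega)]^d$, both momentum data lie in $[L^2(\Omega)]^d$ and both continuity data lie in $H^1_0(\Omega)\cap L^2_0(\Omega)$, so Theorem~\ref{thrm:td:stokes} applies and yields $u,\lambda\in[H^2(\Omega)]^d$ and $p,\mu\in H^1(\Omega)$, whence $x_{\mcF}\in X_+$. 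The bounds the theorem provides are stated in exactly the $\alpha$-weighted spaces appearing in $X_{+,k}$: a short computation with the definitions of the weighted sum/intersection norms gives the identities $\|u\|_{U_{+,k}}^2=h_k^2\alpha^{1/2}\|u\|_{\alpha^{-1/2}H^1(\Omega)\cap H^2(\Omega)}^2$ and $\|p\|_{P_{+,k}}^2=h_k^2\alpha^{1/2}\|p\|_{\alpha^{1/2}H^2(\Omega)+H^1(\Omega)}^2$ (and likewise for $\lambda,\mu$), so multiplying the conclusion of Theorem~\ref{thrm:td:stokes} by $h_k^2\alpha^{1/2}$ controls $\|(u,p)\|_{Y_{+,k}}^2$ from the first subproblem and $\alpha^{-1}\|(\lambda,\mu)\|_{Y_{+,k}}^2$ from the second.

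It then remains to bound the right-hand sides. Splitting each momentum datum with the triangle inequality in the relevant sum-space norm, the genuine data contributions $\zeta,\chi,f,g$ reassemble — after matching the $\alpha$-powers against the definitions of $\|\cdot\|_{(U_{-,k})^*}$ and $\|\cdot\|_{(P_{-,k})^*}$ — into exactly $\|\mcF\|_{(X_{-,k})^*}^2$, while the remaining pieces coming from the added mass term and the coupling variable produce, using only $u,\lambda\in[H^1_0(\Omega)]^d$ together with the elementary bound $\|v\|_{\alpha^{1/2}H^1_0(\Omega)+L^2(\Omega)}^2\le\alpha^{1/2}\|v\|_{H^1(\Omega)}^2$, precisely the remainder $h_k^2(\|u\|_{H^1(\Omega)}^2+\alpha^{-1}\|\lambda\|_{H^1(\Omega)}^2)$ in~\eqref{eq:aux1}. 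I expect the main obstacle to be purely the bookkeeping: verifying that every one of the $\alpha$-weighted sum- and intersection-norm identities is exact and not merely correct up to powers of $\alpha$, since it is precisely the common factor $h_k^2\alpha^{1/2}$ carried through both subproblems that makes the data terms close against $\|\mcF\|_{(X_{-,k})^*}^2$ and, at the same time, turns the added mass and coupling contributions into the stated lower-order remainder.
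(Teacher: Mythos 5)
Your proposal follows essentially the same route as the paper's own proof: the same decoupling of the KKT system into two generalized Stokes subproblems with the added reaction term $\alpha^{-1/2}(\cdot,\cdot)_{L^2(\Omega)}$, the same shifted momentum data $f-u_{\mcF}+\alpha^{-1/2}\lambda_{\mcF}$ and $\zeta+\alpha^{-1}\lambda_{\mcF}+\alpha^{-1/2}u_{\mcF}$, the same application of Theorem~\ref{thrm:td:stokes} to each subproblem, and the same final use of $u_{\mcF},\lambda_{\mcF}\in[H^1_0(\Omega)]^d$ to absorb the coupling terms into the stated remainder. The only nit is that the $(\lambda_{\mcF},\mu_{\mcF})$ subproblem must be scaled by $h_k^2\alpha^{-1/2}$ rather than $h_k^2\alpha^{1/2}$ so that $f$ and $g$ enter $\|\mcF\|_{(X_{-,k})^*}^2$ without a spurious factor of $\alpha$ (the $\alpha$-weight belongs on $\zeta,\chi$ only) --- but this is precisely the bookkeeping you flagged, and it closes as you expect.
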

\begin{proof}
	Let $\mcF(\tilde{u},\tilde{p},\tilde{\lambda},\tilde{\mu}):=
		(f,\tilde{u})_{L^2(\Omega)} + (g,\tilde{p})_{L^2(\Omega)}+
		(\zeta,\tilde{\lambda})_{L^2(\Omega)} + (\chi,\tilde{\mu})_{L^2(\Omega)}$, where
		$f, \zeta \in [L^2(\Omega)]^d$ and $g,\chi \in H^1_0(\Omega)\cap L^2_0(\Omega)$.
		
	Let $\hat{f}:= f-u_{\mcF}+ \alpha^{-1/2}\lambda_{\mcF}$ and 
	$\hat{\zeta}:=\zeta+\alpha^{-1} \lambda_{\mcF}+\alpha^{-1/2} u_{\mcF}$.
	Then we can rewrite the KKT-system as follows:
        \begin{align*}
              (\nabla \lambda_{\mcF},\nabla \tilde{u})_{L^2(\Omega)} + \alpha^{-1/2}
		( \lambda_{\mcF}, \tilde{u})_{L^2(\Omega)}   +
              (\mu_{\mcF},\nabla \cdot \tilde{u})_{L^2(\Omega)} &=
               (\hat{f}, \tilde{u})_{L^2(\Omega)} \\
              (\nabla \cdot \lambda_{\mcF},\tilde{p})_{L^2(\Omega)}
              & = (g,\tilde{p})_{L^2(\Omega)}
        \end{align*}
	and
        \begin{align*}
              (\nabla u_{\mcF},\nabla \tilde{\lambda})_{L^2(\Omega)}
		+\alpha^{-1/2} ( u_{\mcF}, \tilde{\lambda})_{L^2(\Omega)}
               +(p_{\mcF},\nabla\cdot \tilde{\lambda})_{L^2(\Omega)}
               &=(\hat{\zeta}, \tilde{\lambda})_{L^2(\Omega)}\\
               (\nabla \cdot u_{\mcF},\tilde{\mu})_{L^2(\Omega)} 
               &= (\chi,\tilde{p})_{L^2(\Omega)}.
        \end{align*}
  As $\hat{f}  \in [L^2(\Omega)]^d$, 
	$g\in H^1_0(\Omega)\cap L^2_0(\Omega)$,
  $\hat{\zeta} \in [L^2(\Omega)]^d$
  and $\chi \in H^1_0(\Omega)\cap L^2_0(\Omega)$,
  we obtain using Theorem~\ref{thrm:td:stokes} that $x_{\mcF} \in X_+$ and the following bounds are satisfied:
  \begin{align*}
	&\|\lambda_{\mcF}\|_{\alpha^{-1/2} H^1(\Omega) \cap H^2(\Omega) }^2 +
		\|\mu\|_{\alpha^{1/2} H^2(\Omega) + H^1(\Omega)}^2 \\
	&\qquad \les \|f-u_{\mcF}+\alpha^{-1/2}\lambda_{\mcF} \|_{\alpha^{1/2} H^1_0(\Omega) + L^2(\Omega) }^2 +
		\|g\|_{ \alpha^{-1/2} L^2_0(\Omega) \cap H^1_0(\Omega) }^2 
  \end{align*}
  and
  \begin{align*}
	&\|u_{\mcF}\|_{\alpha^{-1/2} H^1(\Omega) \cap H^2(\Omega) }^2 +
		\|p_{\mcF}\|_{ \alpha^{1/2} H^2(\Omega) + H^1(\Omega) }^2 \\
	&\qquad \les \|\zeta-\alpha^{-1}\lambda_{\mcF}+\alpha^{-1/2}u_{\mcF} \|_{\alpha^{1/2} H^1_0(\Omega) + L^2(\Omega) }^2 +
		\|\chi\|_{ \alpha^{-1/2} L^2_0(\Omega) \cap H^1_0(\Omega) }^2.
  \end{align*}
  We can combine these two estimates and obtain
  \begin{align*}
	&\|u_{\mcF}\|_{ H^1(\Omega) \cap \alpha^{1/2} H^2(\Omega) }^2 
		+ \|p_{\mcF}\|_{  \alpha H^2(\Omega) + \alpha^{1/2} H^1(\Omega) }^2 
	\\&\qquad
		+ \alpha^{-1} \|\lambda_{\mcF}\|_{ H^1(\Omega) \cap \alpha{1/2} H^2(\Omega) }^2 
		+ \alpha^{-1} \|\mu\|_{ \alpha H^2(\Omega) + \alpha^{1/2} H^1(\Omega) }^2
	\\&\qquad\les 
		  \|f\|_{ H^1_0(\Omega) + \alpha^{-1/2} L^2(\Omega) }^2
		+ \|g\|_{ \alpha^{-1} L^2_0(\Omega) \alpha^{-1/2} \cap H^1_0(\Omega) }^2
	\\&\qquad\qquad	
		+ \alpha \|\zeta\|_{ H^1_0(\Omega) + \alpha^{-1/2} L^2(\Omega) }^2 
		+ \alpha \|\chi\|_{ \alpha^{-1} L^2_0(\Omega) \cap \alpha^{-1/2} H^1_0(\Omega) }^2
	\\&\qquad\qquad
		+ \|u_{\mcF} \|_{ H^1_0(\Omega) + \alpha^{-1/2} L^2(\Omega) }^2 
		+ \alpha^{-1} \|\lambda_{\mcF}\|_{ H^1_0(\Omega) + \alpha^{-1/2} L^2(\Omega) }^2.
  \end{align*}
  Note that $\|u_{\mcF} \|_{ H^1_0(\Omega) + \alpha^{-1/2} L^2(\Omega) } \le \|u_{\mcF} \|_{ H^1_0(\Omega)}=\|u_{\mcF} \|_{ H^1(\Omega)}$
  holds because of $u_{\mcF}\in [H^1_0(\Omega)]^d$. As the analogous holds also for $\lambda_{\mcF}$,
  this finishes the proof.
\qed\end{proof}

To show condition~\citecond{(A4)}, we have to bound
$\|u_{\mcF} \|_{H^1(\Omega)}^2 + \alpha^{-1} \|\lambda_{\mcF}\|_{H^1(\Omega)}^2$ from above.
For showing such a result, we need some notation.

As $H^1_0(\Omega)$ is dense in $L^2(\Omega)$, for $u\in [H^2(\Omega)]^d$ the function $-\Delta u \in [L^2(\Omega)]^d$
can be approximated by some function $w^{\epsilon} \in [H^1_0(\Omega)]^d$ such that
\begin{equation*}
	\|-\Delta u - w^{\epsilon}\|_{L^2(\Omega)}^2 \le \epsilon.
\end{equation*}
So, we can introduce an operator $-\Delta^{\epsilon}:[H^2(\Omega)]^d\rightarrow [H^1_0(\Omega)]^d$ such that
\begin{equation*}
	\|-\Delta u - (-\Delta^{\epsilon}) u \|_{L^2(\Omega)}^2 \le \epsilon.
\end{equation*}

Analogously, we  introduce the operator $\nabla^{\epsilon}:H^1(\Omega)
\rightarrow [H^1_0(\Omega)]^d$ such that
\begin{equation*}
	\|\nabla p - \nabla^{\epsilon}p \|_{L^2(\Omega)}^2 \le \epsilon.
\end{equation*}

\begin{lemma}\label{lem:aux2}
	Let $\mcF \in (X_-)^*$ and let
	$x_{\mcF}=(u_{\mcF},p_{\mcF},\lambda_{\mcF},\mu_{\mcF})$ 
	be the solution of~\eqref{a4:problem}. Then $x_{\mcF}$ satisfies the estimate
	\begin{equation}\label{eq:aux2}
		h_k^2\left(\|u_{\mcF}\|_{H^1(\Omega)}^2 + \alpha^{-1}  \|\lambda_{\mcF}\|_{H^1(\Omega)}^2\right)
				\les \|\mcF\|_{(X_{-,k})^*} \|x_{\mcF}\|_{X_{+,k}}.
	\end{equation}
\end{lemma}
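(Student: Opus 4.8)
The plan is to realize the quantity $\|u_{\mcF}\|_{H^1(\Omega)}^2+\alpha^{-1}\|\lambda_{\mcF}\|_{H^1(\Omega)}^2$ as the value of $\mcB$ on a single, carefully weighted test function, and then to bound that value against the data. Concretely, I would test the variational problem~\eqref{a4:problem} with
\begin{equation*}
  y := (\alpha^{-1}\lambda_{\mcF},\, -\alpha^{-1}\mu_{\mcF},\, u_{\mcF},\, -p_{\mcF}) \in X .
\end{equation*}
The $\alpha$-weights and signs are chosen precisely so that in $\mcB(x_{\mcF},y)$ the two zeroth-order (mass) terms cancel against each other, as do the two divergence pairings and the two pressure pairings, by symmetry of the $L^2$-inner product and of $\mcB$; what survives is only the two positive gradient contributions,
\begin{equation*}
  \mcB(x_{\mcF},y) = \|\nabla u_{\mcF}\|_{L^2(\Omega)}^2 + \alpha^{-1}\|\nabla \lambda_{\mcF}\|_{L^2(\Omega)}^2 .
\end{equation*}
Since $u_{\mcF},\lambda_{\mcF}\in[H^1_0(\Omega)]^d$, the Poincaré inequality gives $\mcB(x_{\mcF},y)\eqsim\|u_{\mcF}\|_{H^1(\Omega)}^2+\alpha^{-1}\|\lambda_{\mcF}\|_{H^1(\Omega)}^2$, while $\mcB(x_{\mcF},y)=\mcF(y)$ because $x_{\mcF}$ solves~\eqref{a4:problem} and $y\in X$. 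Thus~\eqref{eq:aux2} reduces to the single estimate $h_k^2\,\mcF(y)\les\|\mcF\|_{(X_{-,k})^*}\|x_{\mcF}\|_{X_{+,k}}$.

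Writing out $\mcF(y)=\alpha^{-1}(f,\lambda_{\mcF})_{L^2(\Omega)}-\alpha^{-1}(g,\mu_{\mcF})_{L^2(\Omega)}+(\zeta,u_{\mcF})_{L^2(\Omega)}-(\chi,p_{\mcF})_{L^2(\Omega)}$, the task is to control these four pairings. Here one must resist the obvious route $\mcF(y)\le\|\mcF\|_{(X_{-,k})^*}\|y\|_{X_{-,k}}$: the weak $X_{-,k}$-norm of $y$ over-counts the negative-order content of $\lambda_{\mcF}$ and $u_{\mcF}$ and leaks an uncompensated factor $\alpha^{-1/2}$, and estimating the four pairings separately fails for the same reason, each carrying a spurious power of $\alpha$. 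The decisive point is that the $f$--$\lambda_{\mcF}$ and $\zeta$--$u_{\mcF}$ pairings (and, symmetrically, the $g$--$\mu_{\mcF}$ and $\chi$--$p_{\mcF}$ pairings) have to be treated jointly, because their $\alpha$-critical cross-contributions cancel.

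To exploit this cancellation I would decompose each datum along the sum/intersection structure that defines $(X_{-,k})^*$ (for instance $f=f_1+f_2$ with $f_1\in[H^1_0(\Omega)]^d$, $f_2\in[L^2(\Omega)]^d$, and likewise for $g,\zeta,\chi$), pairing the smooth pieces against the rough solution components and the rough pieces against the $H^2$/$H^1$-regular components. The integrations by parts that shift the derivatives onto $u_{\mcF},p_{\mcF},\lambda_{\mcF},\mu_{\mcF}$ require testing against $-\Delta u_{\mcF}$ and $\nabla p_{\mcF}$, which lie only in $[L^2(\Omega)]^d$ and not in the test space $[H^1_0(\Omega)]^d$; this is exactly the purpose of the regularized operators $-\Delta^{\epsilon}$ and $\nabla^{\epsilon}$ introduced above, after which one passes to the limit $\epsilon\to0$. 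The derivatives landing on the solution are then absorbed by $\|x_{\mcF}\|_{X_{+,k}}$, the data by $\|\mcF\|_{(X_{-,k})^*}$, and the leftover gradient terms are reabsorbed into the left-hand side by Young's inequality.

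The step I expect to be the main obstacle is precisely the $\alpha$-robustness of this final bound. Every individual estimate loses a factor $\alpha^{-1/2}$, so the proof stands or falls on retaining the cross-term cancellation and on the exact matching between the $\alpha$-weights of the primal and dual blocks and the powers of $\alpha$ hidden in the sum/intersection norms of $X_{+,k}$ and $(X_{-,k})^*$. Extracting a genuinely $\alpha$-independent constant from that bookkeeping is the heart of the lemma.
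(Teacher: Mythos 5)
Your swap test function $y=(\alpha^{-1}\lambda_{\mcF},\,-\alpha^{-1}\mu_{\mcF},\,u_{\mcF},\,-p_{\mcF})$ does lie in $X$, and your identity $\mcB(x_{\mcF},y)=\|\nabla u_{\mcF}\|_{L^2(\Omega)}^2+\alpha^{-1}\|\nabla\lambda_{\mcF}\|_{L^2(\Omega)}^2$ is correct (the mass, divergence and pressure pairings indeed cancel). But this step is circular rather than a reduction: since $\mcB(x_{\mcF},y)=\mcF(y)$, your ``remaining task'' $h_k^2\,\mcF(y)\les\|\mcF\|_{(X_{-,k})^*}\|x_{\mcF}\|_{X_{+,k}}$ is, by that very identity and Friedrichs' inequality, word for word the statement of the lemma. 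All the work is still ahead, and the mechanism you propose for it fails. The pairings produced by $y$ are the \emph{crossed} ones, $\alpha^{-1}(f,\lambda_{\mcF})_{L^2(\Omega)}$, $(\zeta,u_{\mcF})_{L^2(\Omega)}$, $\alpha^{-1}(g,\mu_{\mcF})_{L^2(\Omega)}$, $(\chi,p_{\mcF})_{L^2(\Omega)}$, and these carry an irreparable $\alpha$-deficit: in $\|\mcF\|_{(X_{-,k})^*}$ the datum $f$ enters with weight $1$, while $\lambda_{\mcF}$ enters $\|x_{\mcF}\|_{X_{+,k}}$ (and the absorbable left-hand side) with weight $\alpha^{-1/2}$, so the prefactor $\alpha^{-1}$ in front of $(f,\lambda_{\mcF})_{L^2(\Omega)}$ cannot be paid for; any direct estimate (splitting $f=f_1+f_2$, integrating by parts, or absorbing $\alpha^{-1}\|\nabla\lambda_{\mcF}\|_{L^2(\Omega)}^2$ by Young's inequality) loses an uncompensated negative power of $\alpha$, between $\alpha^{-1/4}$ and $\alpha^{-1}$. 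Moreover, the cancellation you invoke between the $f$--$\lambda_{\mcF}$ and $\zeta$--$u_{\mcF}$ contributions cannot exist, because $f$ and $\zeta$ are independent data: taking $g=\zeta=\chi=0$, the identity reads $\|\nabla u_{\mcF}\|_{L^2(\Omega)}^2+\alpha^{-1}\|\nabla\lambda_{\mcF}\|_{L^2(\Omega)}^2=\alpha^{-1}(f,\lambda_{\mcF})_{L^2(\Omega)}$, so there is nothing left to cancel against, yet the weight mismatch persists.

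The paper avoids exactly this trap by keeping the test function in the \emph{same} slots as the solution rather than swapping primal and dual blocks: it takes $\tilde{x}^{\epsilon}=(-\Delta^{\epsilon}u_{\mcF},\,-\nabla\cdot\nabla^{\epsilon}p_{\mcF},\,\Delta^{\epsilon}\lambda_{\mcF},\,\nabla\cdot\nabla^{\epsilon}\mu_{\mcF})$ and evaluates $\mcF(\tilde{x}^{\epsilon})-\mcB(x_{\mcF},\tilde{x}^{\epsilon})=0$. With this choice the roles are reversed relative to yours: the coercive terms $-\|u_{\mcF}\|_{H^1(\Omega)}^2-\alpha^{-1}\|\lambda_{\mcF}\|_{H^1(\Omega)}^2$ come from the \emph{mass} terms of $\mcB$, the stiffness cross-terms $(\nabla u_{\mcF},\nabla\tilde{\lambda})_{L^2(\Omega)}$, $(\nabla\lambda_{\mcF},\nabla\tilde{u})_{L^2(\Omega)}$ and the divergence/pressure terms are the ones that cancel (up to $O(\epsilon)$), and, crucially, every datum pairs with its \emph{own} solution component: $(f,-\Delta^{\epsilon}u_{\mcF})_{L^2(\Omega)}$, $(\zeta,\Delta^{\epsilon}\lambda_{\mcF})_{L^2(\Omega)}$, etc., where the $\alpha$-weights match exactly ($1$ against $1$, and $\alpha^{1/2}$ against $\alpha^{-1/2}$). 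This matched pairing is the key idea your proposal is missing; your regularization device $-\Delta^{\epsilon}$, $\nabla^{\epsilon}$ and your sum/intersection splittings of the data are the right tools, but applied to the swapped test function they cannot yield an $\alpha$-independent constant.
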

\begin{proof}
		Let $\mcF(\tilde{u},\tilde{p},\tilde{\lambda},\tilde{\mu}):=
		(f,\tilde{u})_{L^2(\Omega)} + (g,\tilde{p})_{L^2(\Omega)}+
		(\zeta,\tilde{\lambda})_{L^2(\Omega)} + (\chi,\tilde{\mu})_{L^2(\Omega)}$, where
		$f, \zeta \in [L^2(\Omega)]^d$ and $g,\chi \in H^1_0(\Omega)\cap L^2_0(\Omega)$.

	The idea of this proof is to show that for all $\epsilon>0$ there
	is some $\tilde{x}^{\epsilon}\in X$ such that
	\begin{align}\nonumber
		&\mcF(\tilde{x}^{\epsilon})-\mcB(x_{\mcF},\tilde{x}^{\epsilon})\\
				&\quad\les h_k^{-2} \|\mcF\|_{(X_{-,k})^*} 
					\|x_{\mcF}\|_{X_{+,k}} -\|u_{\mcF}\|_{H^1(\Omega)}^2 
					- \alpha^{-1}  \|\lambda_{\mcF}\|_{H^1(\Omega)}^2 \label{eq:aux2a}\\
			&\quad\quad+ \epsilon (\alpha^{1/2} + \alpha^{-1/2}) h_k^{-1}
				(\|x_{\mcF}\|_{X_{+,k}} + \|\mcF\|_{(X_{-,k})^*})+\epsilon^2.\nonumber
	\end{align}
	Note that the left-hand-side of the inequality is $0$. Therefore, 
	this would be sufficient to show the statement of the lemma, as
	$\epsilon>0$ can be chosen arbitrarily small.

	In the following, we show that~\eqref{eq:aux2a} is satisfied for the choice
	$\tilde{x}^{\epsilon}:=(-\Delta^{\epsilon} u_{\mcF}, $ $
	-\nabla\cdot\nabla^{\epsilon} p_{\mcF},
	\Delta^{\epsilon} \lambda_{\mcF},
	\nabla\cdot\nabla^{\epsilon} \mu_{\mcF})$.
	We estimate the individual summands of $\mcF(\tilde{x}^{\epsilon})-
	\mcB(x_{\mcF},\tilde{x}^{\epsilon})$ separately. For the first one, we obtain
	\begin{align*}
		& -(u_{\mcF},-\Delta^{\epsilon} u_{\mcF})_{L^2(\Omega)}	
			\le- (u_{\mcF},-\Delta u_{\mcF})_{L^2(\Omega)} + \epsilon \|u_{\mcF}\|_{L^2(\Omega)}\\
			&\quad= - (\nabla u_{\mcF},\nabla u_{\mcF})_{L^2(\Omega)} + \epsilon \|u_{\mcF}\|_{L^2(\Omega)}
			\les - \|u_{\mcF}\|_{H^1(\Omega)}^2 + \epsilon h_k^{-1} \|x_{\mcF}\|_{X_{+,k}}
	\end{align*}
	due to the fact that $u_{\mcF}\in [H^2(\Omega)\cap H^1_0(\Omega)]^d$ and
	due to Friedrichs' inequality. The same can be done for
	$\alpha^{-1} (\lambda_{\mcF},\Delta^{\epsilon} \lambda_{\mcF})_{L^2(\Omega)}$.
	
	For the next two summands,
	\begin{align*}
		&-(\nabla u_{\mcF},\nabla \Delta^{\epsilon} \lambda_{\mcF})_{L^2(\Omega)} - (\nabla \lambda_{\mcF},\nabla (-\Delta^{\epsilon}) u_{\mcF})_{L^2(\Omega)}\\
		&\quad = (\Delta u_{\mcF}, \Delta^{\epsilon} \lambda_{\mcF})_{L^2(\Omega)} - (\Delta \lambda_{\mcF}, \Delta^{\epsilon} u_{\mcF})_{L^2(\Omega)}\\
		&\quad \le (\Delta u_{\mcF}, \Delta \lambda_{\mcF})_{L^2(\Omega)} - (\Delta \lambda_{\mcF}, \Delta u_{\mcF})_{L^2(\Omega)} + \epsilon(\|\Delta u_{\mcF}\|_{L^2(\Omega)}+\|\Delta \lambda_{\mcF}\|_{L^2(\Omega)}) \\
		&\quad \le \epsilon(\| u_{\mcF} \|_{H^2(\Omega)}+\| \lambda_{\mcF} \|_{H^2(\Omega)})
			\le \epsilon h_k^{-1} (\alpha^{1/4} + \alpha^{-1/4}) \|x_{\mcF}\|_{X_{+,k}}
	\end{align*}
	is satisfied due to the fact that $\Delta^{\epsilon}$ maps into $[H^1_0(\Omega)]^d$.

	For the next two summands, we obtain
	\begin{equation}\nonumber
	\begin{aligned}
		&-(\nabla\cdot u_{\mcF}, \nabla\cdot\nabla^{\epsilon} \mu_{\mcF})_{L^2(\Omega)} - (\nabla\cdot(-\Delta^{\epsilon}) u_{\mcF},\mu_{\mcF})_{L^2(\Omega)}\\  
			&\quad= (\nabla \nabla\cdot u_{\mcF},\nabla^{\epsilon}\mu_{\mcF})_{L^2(\Omega)}- (\Delta^{\epsilon}u_{\mcF},\nabla \mu_{\mcF})_{L^2(\Omega)} \\
			&\quad\le (\nabla \nabla\cdot u_{\mcF},\nabla^{\epsilon}\mu_{\mcF})_{L^2(\Omega)} - (\Delta^{\epsilon}u_{\mcF},\nabla^{\epsilon} \mu_{\mcF})_{L^2(\Omega)} + \epsilon \|\Delta^{\epsilon} u_{\mcF}\|_{L^2(\Omega)} \\
			&\quad\le -(\nabla u_{\mcF},\nabla\nabla^{\epsilon}\mu_{\mcF})_{L^2} - (\Delta u_{\mcF},\nabla^{\epsilon} \mu_{\mcF})_{L^2}+ \epsilon (\|\Delta u_{\mcF}\|_{L^2} +\|\nabla^{\epsilon} p_{\mcF}\|_{L^2}+\epsilon) \\
			&\quad= -(\nabla u_{\mcF},\nabla\nabla^{\epsilon}\mu_{\mcF})_{L^2} + (\nabla u_{\mcF},\nabla \nabla^{\epsilon} \mu_{\mcF})_{L^2} + \epsilon (\|\Delta u_{\mcF}\|_{L^2} +\|\nabla^{\epsilon} p_{\mcF}\|_{L^2}+\epsilon) \\
			&\quad\le  \epsilon (\|u_{\mcF}\|_{H^2(\Omega)} +\|p_{\mcF}\|_{H^1(\Omega)}+2\epsilon)
			\les \epsilon h_k^{-1} (\alpha^{-1/4} + \alpha^{-1/2}) \|x_{\mcF}\|_{X_{+,k}} + \epsilon^2.
	\end{aligned}
	\end{equation}
	The same can be done for $-(\nabla\cdot \lambda_{\mcF},- \nabla\cdot\nabla^{\epsilon} p_{\mcF})_{L^2(\Omega)} - (\nabla\cdot\Delta^{\epsilon} \lambda_{\mcF},p_{\mcF})_{L^2(\Omega)}$.
		
	Let $f_2\in [H^1_0(\Omega)]^d$ and $f_1:=f-f_2$. Then
	\begin{equation*}
		(f_1,-\Delta^{\epsilon} u_{\mcF})_{L^2(\Omega)}
			\les
			\| f_1\|_{\alpha^{-1/2} L^2(\Omega)} \|u_{\mcF}\|_{\alpha^{1/2} 
			H^2(\Omega)} + \epsilon\alpha^{1/4} \| f_1\|_{\alpha^{-1/2} L^2(\Omega)}
	\end{equation*}
	holds as well as
	\begin{align*}
		(f_2,-\Delta^{\epsilon} u_{\mcF})_{L^2(\Omega)}
		&\les (\nabla f_2,\nabla u_{\mcF})_{L^2(\Omega)}+ \epsilon\| f_2\|_{L^2(\Omega)}\\
			&\les
			\| f_2\|_{H^1(\Omega)} \|u_{\mcF}\|_{H^1(\Omega)} + \epsilon\| f_2\|_{H^1(\Omega)}.
	\end{align*}
	This implies
	\begin{align*}\label{eq:aux2c}
		&(f,-\Delta^{\epsilon} u)_{L^2(\Omega)}\\
		&\qquad \les \|f\|_{H^1_0(\Omega)+ \alpha^{-1/2} L^2(\Omega)} 
						\|u\|_{H^1(\Omega)\cap \alpha^{1/2} H^2(\Omega)} \\
		&\qquad\qquad+ \epsilon (1+\alpha^{1/4})
							\|f\|_{H^1_0(\Omega)+ \alpha^{-1/2} L^2(\Omega)}\\
		&\qquad\les \|f\|_{H^1_0(\Omega)+ \alpha^{-1/2} L^2(\Omega)} 
						\|u\|_{H^1(\Omega)\cap \alpha^{1/2} H^2(\Omega)} + \epsilon h_k^{-1}
							(1+\alpha^{1/4})\|\mcF\|_{(X_{-,k})^*}.
	\end{align*}
	Let $p_2\in H^2(\Omega)$ and $p_1:=p_{\mcF}-p_2\in H^1(\Omega)$.
	We have
	\begin{align*}
		(g,-\nabla\cdot\nabla^{\epsilon} p_1)_{L^2(\Omega)} &=
			(\nabla g,\nabla^{\epsilon} p_1)_{L^2(\Omega)}\\
			&\les \|g\|_{ \alpha^{-1/2} H^1(\Omega) }
					\|p_1\|_{ \alpha^{1/2} H^1(\Omega) } + \epsilon \|g\|_{H^1(\Omega)}.
	\end{align*}
	Moreover, using $g\in H^1_0(\Omega)$, we have also
	\begin{align*}
		&(g,-\nabla\cdot\nabla^{\epsilon} p_2)_{L^2(\Omega)} 
		=	(\nabla g,\nabla^{\epsilon} p_2)_{L^2(\Omega)} 
		\les	(\nabla g,\nabla p_2)_{L^2(\Omega)} + \epsilon \|g\|_{H^1(\Omega)}\\
		&\quad =-( g,\nabla \cdot \nabla p_2)_{L^2(\Omega)} + \epsilon \|g\|_{H^1(\Omega)}
			\le \|g\|_{ \alpha^{-1} L^2(\Omega) }
					\|p_2\|_{ \alpha H^2(\Omega) } + \epsilon \|g\|_{H^1(\Omega)}
	\end{align*}
	and therefore	
	\begin{align*}
		&(g,-\nabla\cdot\nabla^{\epsilon} p_{\mcF})_{L^2(\Omega)}\\
			& \qquad \les \| g\|_{\alpha^{-1} L^2(\Omega)\cap \alpha^{-1/2} H^1(\Omega) }
			\|p_{\mcF}\|_{\alpha^{1/2} H^1(\Omega) + \alpha H^2(\Omega)} + \epsilon \|g\|_{H^1(\Omega)}\\
			& \qquad \les \| g\|_{\alpha^{-1} L^2(\Omega)\cap \alpha^{-1/2} H^1(\Omega) }
			\|p_{\mcF}\|_{\alpha^{1/2} H^1(\Omega) + \alpha H^2(\Omega)} + \epsilon h_k^{-1}\alpha^{1/4}
							\|\mcF\|_{(X_{-,k})^*}
	\end{align*}
	is satisfied. The same can be done for $(\zeta,\Delta^{\epsilon}\lambda_{\mcF})_{L^2(\Omega)}$ and 
	$(\chi,\nabla\cdot\nabla^{\epsilon}\mu_{\mcF})_{L^2(\Omega)}$.
	
	Combining these results, we immediately obtain~\eqref{eq:aux2a}, which
	finishes the proof.
\qed\end{proof}

\begin{theorem}
	Condition~\citecond{(A4)} is satisfied.
\end{theorem}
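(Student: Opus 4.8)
The goal is to verify condition \citecond{(A4)}, which requires the bound $\|x_{\mcF}\|_{X_{+,k}} \les \|\mcF\|_{(X_{-,k})^*}$ for the solution $x_{\mcF}$ of~\eqref{a4:problem}. Looking at the machinery already in place, the natural plan is to combine the two preparatory lemmas. Lemma~\ref{lem:step2} gives
\begin{equation*}
        \|x_{\mcF}\|_{X_{+,k}}^2 \les \|\mcF\|_{(X_{-,k})^*}^2
                + h_k^2\left(\|u_{\mcF}\|_{H^1(\Omega)}^2 + \alpha^{-1}\|\lambda_{\mcF}\|_{H^1(\Omega)}^2\right),
\end{equation*}
so the only obstruction to the desired estimate is the extra $H^1$-term on the right. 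Lemma~\ref{lem:aux2} is precisely designed to control this term, furnishing
\begin{equation*}
        h_k^2\left(\|u_{\mcF}\|_{H^1(\Omega)}^2 + \alpha^{-1}\|\lambda_{\mcF}\|_{H^1(\Omega)}^2\right)
                \les \|\mcF\|_{(X_{-,k})^*}\,\|x_{\mcF}\|_{X_{+,k}}.
\end{equation*}

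The plan is therefore to substitute the second bound into the first, yielding an inequality of the shape $a^2 \les b^2 + b\,a$, where I abbreviate $a := \|x_{\mcF}\|_{X_{+,k}}$ and $b := \|\mcF\|_{(X_{-,k})^*}$. First I would note that both lemmas apply, since $\mcF\in(X_-)^*$ is arbitrary and the regularity assumptions~\citecond{(R)} and~\citecond{(R1)} are in force; in particular Lemma~\ref{lem:step2} already establishes $x_{\mcF}\in X_+$, which is part of what \citecond{(A4)} demands. Then the remaining task is the elementary absorption argument to turn $a^2 \les b^2 + b\,a$ into $a \les b$.

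The key algebraic step is a Young-type inequality: from $a^2 \le C_1 b^2 + C_2\, b\,a$ one applies $C_2\, b\,a \le \tfrac12 a^2 + \tfrac12 C_2^2 b^2$, so that $\tfrac12 a^2 \le (C_1 + \tfrac12 C_2^2) b^2$, and hence $a \les b$. This absorbs the mixed term into the left-hand side at the cost of enlarging the implied constant, and it is legitimate precisely because $a = \|x_{\mcF}\|_{X_{+,k}}$ is finite (guaranteed by Lemma~\ref{lem:step2}). Unwinding the abbreviations gives exactly $\|x_{\mcF}\|_{X_{+,k}} \les \|\mcF\|_{(X_{-,k})^*}$, which is inequality~\eqref{eq:a4}, and together with $x_{\mcF}\in X_+$ this is the full content of condition~\citecond{(A4)}.

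I do not anticipate a genuine obstacle here: all the analytic difficulty has already been discharged in Lemmas~\ref{lem:step2} and~\ref{lem:aux2}, and what remains is the routine combination plus a one-line Young's inequality. The only point requiring mild care is to confirm that $a$ is indeed finite before dividing or absorbing—this is why invoking $x_{\mcF}\in X_+$ from Lemma~\ref{lem:step2} at the outset matters. With that in hand, the proof is essentially just \emph{combine the two lemmas and absorb}.
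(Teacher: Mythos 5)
Your proposal is correct and follows essentially the same route as the paper: combine the bounds of Lemma~\ref{lem:step2} and Lemma~\ref{lem:aux2} and then resolve the resulting inequality $a^2 \les b^2 + b\,a$, the only (immaterial) difference being that you absorb the mixed term via Young's inequality while the paper solves the quadratic inequality explicitly, obtaining the constant $\tfrac12\bigl(C+\sqrt{4C+C^2}\bigr)$. Your remark that finiteness of $\|x_{\mcF}\|_{X_{+,k}}$ (i.e., $x_{\mcF}\in X_+$ from Lemma~\ref{lem:step2}) is needed before absorbing is a valid point that the paper leaves implicit.
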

\begin{proof}
	By combining~\eqref{eq:aux1} and~\eqref{eq:aux2}, we obtain 
	\begin{align*}
		\|x_{\mcF}\|_{X_{+,k}}^2 \le C \left( \|\mcF\|_{(X_{-,k})^*}^2 + 
			\|\mcF\|_{(X_{-,k})^*} \|x_{\mcF}\|_{X_{+,k}} \right)
	\end{align*}
	for some constant $C>0$ (independent of $k$ and $\beta$) which implies
	\begin{align*}
		\|x_{\mcF}\|_{X_{+,k}} \le \frac12 \left(C+\sqrt{4C+C^2}\right) \|\mcF\|_{(X_{-,k})^*},
	\end{align*}
	i.e.~\eqref{eq:a4}, which finishes the proof.
\qed\end{proof}

So, we have shown condition~\citecond{(A4)}. So, Theorem~\ref{thrm:main}
implies the approximation property.
Note, that we have now shown the approximation property in the 
norm $\|\cdot\|_{X_{-,k}}$, i.e.,~\eqref{eq:apprp:thrm1}. The
next step is to show the approximation property 
in the norm-pair $\hnorm \cdot\hnorm_{0,k}$ and $\hnorm \cdot\hnorm_{2,k}$, i.e.,~\eqref{eq:apprp}.

To show~\eqref{eq:apprp}, the following lemma is sufficient.
\begin{lemma}\label{lem:equiv}
The inequality
\begin{equation}\label{eq:equiv}
	\hnorm x_k\hnorm_{0,k} \les \|x_k\|_{X_{-,k}} 
\end{equation}
is satisfied for all $x_k \in X_k$.
\end{lemma}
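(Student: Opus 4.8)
The plan is to prove the inequality~\eqref{eq:equiv} componentwise, exploiting the fact that both $\hnorm\cdot\hnorm_{0,k}$ and $\|\cdot\|_{X_{-,k}}$ have a block-diagonal/product structure over the four variables $(u,p,\lambda,\mu)$. Since $\hnorm x_k\hnorm_{0,k}^2 = \|\ul x_k\|_{\mcL_k}^2$ with $\mcL_k$ diagonal as in~\eqref{eq:defMcL}, and since the $X_{-,k}$-norm also splits as $\|u\|_{U_{-,k}}^2+\|p\|_{P_{-,k}}^2+\alpha^{-1}(\|\lambda\|_{U_{-,k}}^2+\|\mu\|_{P_{-,k}}^2)$, it suffices to establish four scalar estimates and then add them. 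By the parallel structure of the state and adjoint blocks (the $\lambda$-block is just the $u$-block scaled by $\alpha^{-1/2}$, modulo the $\varphi_{\alpha,k}$ factors, and similarly for $\mu$ versus $p$), I expect only two genuinely distinct estimates: one for the velocity-type variable and one for the pressure-type variable.

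First I would treat the velocity component. The contribution to $\hnorm u_k\hnorm_{0,k}^2$ is $\varphi_{\alpha,k}\|u_k\|_{L^2(\Omega)}^2$, while the $U_{-,k}$-contribution is $h_k^{-2}\|u_k\|_{[H^1_0(\Omega)+\alpha^{-1/2}L^2(\Omega)]^*}^2$. The dual norm unwinds to
\begin{equation*}
	\|u_k\|_{[H^1_0(\Omega)+\alpha^{-1/2}L^2(\Omega)]^*}
	= \nnsup{w}{H^1_0(\Omega)+\alpha^{-1/2}L^2(\Omega)}
		\frac{(u_k,w)_{L^2(\Omega)}}{\|w\|_{H^1_0(\Omega)+\alpha^{-1/2}L^2(\Omega)}},
\end{equation*}
so the claim reduces to showing $\varphi_{\alpha,k}\,h_k^{2}\,\|u_k\|_{L^2(\Omega)}^2 \les \|u_k\|_{[H^1_0(\Omega)+\alpha^{-1/2}L^2(\Omega)]^*}^2$ for $u_k\in U_k$. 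Choosing the test function $w=u_k\in U_k\subseteq H^1_0(\Omega)$ and applying a standard inverse inequality $\|u_k\|_{H^1(\Omega)}\les h_k^{-1}\|u_k\|_{L^2(\Omega)}$ gives a lower bound on the dual norm of the desired order in $h_k$; the $\alpha^{1/2}h_k^{-2}$ part of $\varphi_{\alpha,k}$ is then absorbed by choosing between the $L^2$ and $H^1$ components of the sum-space norm according to whether $\alpha^{1/2}h_k^{-2}$ is small or large. I would carry the adjoint velocity $\lambda$ identically, picking up the extra $\alpha^{-1}$ factor that appears symmetrically in both norms.

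The pressure component is where I expect the main obstacle to lie. Here the $P_{-,k}$-norm involves the dual of an intersection space $[\alpha^{-1}L^2_0(\Omega)\cap\alpha^{-1/2}H^1_0(\Omega)]^*$, which is itself a sum space by duality, whereas the $\hnorm\cdot\hnorm_{0,k}$ contribution is $\alpha h_k^{-2}\varphi_{\alpha,k}^{-1}\|p_k\|_{L^2(\Omega)}^2$. The difficulty is that the natural test functions for the dual pairing live in $H^1_0(\Omega)\cap L^2_0(\Omega)$, not in $P_k$, so I cannot simply test against $p_k$ itself. The way through is to use that $P_k$ is $H^1$-conforming (condition~\citecond{(S)}), so $p_k\in H^1(\Omega)$, and to combine the inverse inequality $\|p_k\|_{H^1(\Omega)}\les h_k^{-1}\|p_k\|_{L^2(\Omega)}$ with the regularity-based norm equivalence from Lemma~\ref{lem:pe} to control the relevant dual pairing from below by $\|p_k\|_{L^2(\Omega)}$, up to the correct powers of $h_k$ and $\alpha$. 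Once the scalar pressure estimate is in place, the adjoint pressure $\mu$ follows by the same symmetry argument as before, and summing the four componentwise bounds yields~\eqref{eq:equiv}.
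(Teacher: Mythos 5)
Your reduction to componentwise estimates is structurally sound, and your velocity argument works: since $U_k\subseteq[H^1_0(\Omega)]^d$, the test function $w=u_k$ is admissible, and the inverse inequality $\|u_k\|_{H^1(\Omega)}\les h_k^{-1}\|u_k\|_{L^2(\Omega)}$ together with $\|u_k\|_{H^1_0(\Omega)+\alpha^{-1/2}L^2(\Omega)}^2\le\min\{\|u_k\|_{H^1(\Omega)}^2,\,\alpha^{-1/2}\|u_k\|_{L^2(\Omega)}^2\}$ gives $\|u_k\|_{[H^1_0(\Omega)+\alpha^{-1/2}L^2(\Omega)]^*}^2\ges\max\{h_k^2,\alpha^{1/2}\}\,\|u_k\|_{L^2(\Omega)}^2\ges h_k^2\varphi_{\alpha,k}\|u_k\|_{L^2(\Omega)}^2$, exactly as needed. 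For comparison, the paper does not argue componentwise at all: it cites Lemma~4.7 of \cite{Takacs:2013} (with $\beta=\alpha^{-1/2}$), which is precisely the combined velocity--pressure estimate $\hnorm y_k\hnorm_{Y,0,k}\les\|y_k\|_{Y_{-,k}}$ on $Y_k$, and then uses the product structure $X_k=Y_k\times Y_k$ in the same way you exploit the state/adjoint symmetry. So you are in effect re-deriving the cited lemma from scratch, which is legitimate---but then its nontrivial half must actually be proven.

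That half is the pressure estimate, and there your proposal has a genuine gap. What must be shown is $\alpha\varphi_{\alpha,k}^{-1}\|p_k\|_{L^2(\Omega)}^2\les\|p_k\|_{[\alpha^{-1}L^2_0(\Omega)\cap\alpha^{-1/2}H^1_0(\Omega)]^*}^2$, i.e., one must exhibit $w\in H^1_0(\Omega)\cap L^2_0(\Omega)$ with $(p_k,w)_{L^2(\Omega)}^2\ges\alpha\varphi_{\alpha,k}^{-1}\|p_k\|_{L^2(\Omega)}^2\bigl(\alpha^{-1}\|w\|_{L^2(\Omega)}^2+\alpha^{-1/2}\|w\|_{H^1(\Omega)}^2\bigr)$. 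You correctly identify the obstruction ($w=p_k$ is inadmissible because Taylor--Hood pressures do not vanish on $\partial\Omega$), but the repair you propose---inverse inequality plus Lemma~\ref{lem:pe}---does not address it. Lemma~\ref{lem:pe} is an equivalence for $\|\cdot\|_P\eqsim\|\cdot\|_{\alpha H^1(\Omega)+\alpha^{1/2}L^2(\Omega)}$, a sum of spaces one order of smoothness \emph{higher} and carrying \emph{no} boundary conditions; it enters the verification of \citecond{(A1a)} for the $X$-norm and yields no lower bound on a dual pairing against the Dirichlet test space $H^1_0(\Omega)\cap L^2_0(\Omega)$, which is exactly where the difficulty sits. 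Note also that there is no slack to give away: taking $p_k$ to be a nodal basis function at a boundary vertex, a direct computation shows the supremum is of order $\min\{\alpha,\alpha^{1/2}h_k^2\}\|p_k\|_{L^2(\Omega)}^2\eqsim\alpha\varphi_{\alpha,k}^{-1}\|p_k\|_{L^2(\Omega)}^2$, so the harmonic-mean weight in $\mcL_k$ is sharp, and any proof must produce a genuinely admissible test function realizing it---e.g., a cutoff of $p_k$ vanishing in the first element layer combined with a polynomial norm-equivalence argument on boundary elements and a mean-value correction, or a duality argument via an auxiliary reaction--diffusion problem. That construction is the actual content of Lemma~4.7 in \cite{Takacs:2013}, which the paper cites rather than re-proves; your sketch leaves it unsupplied.
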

\begin{proof}
	The proof of this lemma is based on Lemma~4.7 in~\cite{Takacs:2013}.
	Lemma~4.7 states (in the notation of the present paper and for
	the choice $\beta:=\alpha^{-1/2}$) that
	\begin{equation}\nonumber
		\alpha^{-1/2} \hnorm y_k\hnorm_{Y,0,k}^2 \les \alpha^{-1/2} \|y_k\|_{Y_{-,k}}^2
	\end{equation}
	is satisfied for all $y_k \in Y_k$. As for $x_k=(y_k,\psi_k)\in X_k=Y_k \times Y_k$ both,
	\begin{equation}\nonumber
		\hnorm (y_k,\psi_k) \hnorm_{0,k}^2 = \hnorm y_k \hnorm_{Y,0,k}^2
					+\alpha^{-1} \hnorm \psi_k \hnorm_{Y,0,k}^2
	\end{equation}
	and
	\begin{equation}\nonumber
		\| (y_k,\psi_k) \|_{X_{-,k}}^2 = \| y_k \|_{Y_{-,k}}^2
					+\alpha^{-1} \| \psi_k \|_{Y_{-,k}}^2,
	\end{equation}
	is satisfied by definition, \eqref{eq:equiv} follows immediately.
\qed\end{proof}

So, we have shown the approximation property~\eqref{eq:apprp}. So, we obtain the following
overall convergence result.
\begin{theorem}
        Assume that
        \begin{itemize}
                \item the regularity assumptions~\citecond{(R)} and~\citecond{(R1)} are
				satisfied on the domain~$\Omega$,
                \item the problem is discretized using the Taylor-Hood element and
                \item the normal equation smoother introduced above is used as smoother.
        \end{itemize}

        Then the two-grid method converges if sufficiently many smoothing
steps are applied, i.e., we have
        \begin{equation*}
                \hnorm x_k^{(1)}-x_k^*\hnorm_{0,k} \le q(\nu) \hnorm x_k^{(0)}-x_k^*\hnorm_{0,k},
        \end{equation*}
        with $q(\nu):=C_S\, C_A\, \nu^{-1/2}$,
        where the constants~$C_A$ and~$C_S$ are independent of the grid
				level~$k$ and the choice of the parameter~$\alpha$.
\end{theorem}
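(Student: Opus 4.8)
The plan is to assemble the final convergence theorem directly from the three ingredients already established in the excerpt: the smoothing property with rate $\eta(\nu)=C_S\nu^{-1/2}$, the approximation property in the norm-pair $\hnorm\cdot\hnorm_{0,k}$ and $\hnorm\cdot\hnorm_{2,k}$, and the definition of the residual norm $\hnorm\cdot\hnorm_{2,k}$. The structure of the argument is the classical Hackbusch splitting: the two-grid iteration operator factors through one error-reducing coarse-grid correction applied after $\nu$ smoothing steps, so the overall contraction rate is the product of the two individual rates. All the analytical work — verifying \citecond{(A1)}, \citecond{(A1a)}, \citecond{(A3)}, \citecond{(A4)}, and Lemma~\ref{lem:equiv} — has already been done, so the final theorem is essentially a bookkeeping step that combines constants that are uniform in $k$ and $\alpha$.

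First I would recall that under the stated hypotheses (regularity \citecond{(R)} and \citecond{(R1)}, Taylor--Hood discretization ensuring \citecond{(S)} and hence \citecond{(A1a)}, and the normal equation smoother), all four abstract conditions hold. Condition~\citecond{(A3)} was verified in the theorem following Theorem~\ref{thrm:main}, and condition~\citecond{(A4)} follows from Lemmas~\ref{lem:step2} and~\ref{lem:aux2} as shown in the theorem immediately preceding Lemma~\ref{lem:equiv}. Thus Theorem~\ref{thrm:main} applies and yields the approximation property~\eqref{eq:apprp:thrm1} in the norm $\|\cdot\|_{X_{-,k}}$. I would then upgrade this to the mesh-norm statement~\eqref{eq:apprp} by invoking Lemma~\ref{lem:equiv}, which gives $\hnorm x_k\hnorm_{0,k}\les\|x_k\|_{X_{-,k}}$ for all $x_k\in X_k$; applied to the left-hand side of~\eqref{eq:apprp:thrm1} (with $x_k=x_k^{(1)}-x_k^*$) and to the residual norm on the right-hand side (noting that dividing by the smaller norm $\|\tilde x_k\|_{X_{-,k}}$ gives an upper bound on dividing by $\hnorm\tilde x_k\hnorm_{0,k}$), this delivers~\eqref{eq:apprp} with a constant $C_A$ uniform in $k$ and $\alpha$.

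With both properties in hand, I would simply chain them: the smoothing property~\eqref{eq:smp} gives $\hnorm x_k^{(0,\nu)}-x_k^*\hnorm_{2,k}\le C_S\nu^{-1/2}\hnorm x_k^{(0)}-x_k^*\hnorm_{0,k}$, and the approximation property~\eqref{eq:apprp} gives $\hnorm x_k^{(1)}-x_k^*\hnorm_{0,k}\le C_A\hnorm x_k^{(0,\nu)}-x_k^*\hnorm_{2,k}$. Composing these two inequalities produces
\begin{equation*}
	\hnorm x_k^{(1)}-x_k^*\hnorm_{0,k}\le C_A\,C_S\,\nu^{-1/2}\,\hnorm x_k^{(0)}-x_k^*\hnorm_{0,k},
\end{equation*}
which is exactly the claimed bound with $q(\nu)=C_S\,C_A\,\nu^{-1/2}$. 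Since $C_S$ and $C_A$ are both independent of $k$ and $\alpha$, so is $q(\nu)$, and for $\nu$ large enough $q(\nu)<1$, establishing robust two-grid convergence.

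The step I expect to require the most care is the transition from the $\|\cdot\|_{X_{-,k}}$-formulation of the approximation property to the $\hnorm\cdot\hnorm_{0,k}$/$\hnorm\cdot\hnorm_{2,k}$-formulation, i.e.\ the correct use of Lemma~\ref{lem:equiv} on both the output norm and inside the supremum defining the residual norm. One must check that the single inequality $\hnorm\cdot\hnorm_{0,k}\les\|\cdot\|_{X_{-,k}}$ controls both sides in the right direction — it strengthens the output bound and simultaneously weakens the denominator in the residual supremum, so the two uses are compatible and no reverse inequality is needed. Everything else is a direct composition of established estimates, so no genuinely new analytic obstacle arises at this final stage; the substance of the paper lies entirely in verifying \citecond{(A4)} via the regularity theory, which is already complete.
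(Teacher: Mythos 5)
Your proposal is correct and follows essentially the same route as the paper, which obtains this theorem precisely by assembling the already-established ingredients: conditions \citecond{(A1)}, \citecond{(A1a)}, \citecond{(A3)} and \citecond{(A4)} feed Theorem~\ref{thrm:main}, Lemma~\ref{lem:equiv} transfers~\eqref{eq:apprp:thrm1} to the norm pair $\hnorm\cdot\hnorm_{0,k}$, $\hnorm\cdot\hnorm_{2,k}$, and composing the resulting approximation property~\eqref{eq:apprp} with the smoothing property~\eqref{eq:smp} gives $q(\nu)=C_S\,C_A\,\nu^{-1/2}$ with constants uniform in $k$ and $\alpha$. One wording slip worth fixing: in your second paragraph you call $\|\tilde x_k\|_{X_{-,k}}$ ``the smaller norm'' and say the quotient with that denominator upper-bounds the one with denominator $\hnorm\tilde x_k\hnorm_{0,k}$, whereas Lemma~\ref{lem:equiv} says $\hnorm\tilde x_k\hnorm_{0,k}\les\|\tilde x_k\|_{X_{-,k}}$, so it is the quotient with denominator $\hnorm\tilde x_k\hnorm_{0,k}$ that dominates (up to a constant) the quotient with denominator $\|\tilde x_k\|_{X_{-,k}}$ --- the direction your final paragraph states correctly and the one the chaining actually requires.
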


Note that we have shown that the method converges in the norm $\hnorm \cdot \hnorm_{0,k}$ if sufficiently
many pre-smoothing steps are applied. The application of post-smoothing steps does not derogate
the convergence because the proposed smoother is power-bounded. Moreover, if we assume that only post-smoothing steps
are applied, the combination of smoothing property and approximation property (which now have to be combined
in the inverse order) leads to convergence in the residual norm $\hnorm \cdot \hnorm_{2,k}$. Again, due
to power-boundedness of the smoother, the method stays convergent if, besides sufficiently many post-smoothing
steps, also pre-smoothing steps are applied.

For all the mentioned cases, the convergence of the W-cycle multigrid method follows under weak 
assumptions, cf.~\cite{Hackbusch:1985}.

\section{Numerical Results}\label{sec:4}

In this section, we illustrate the convergence theory presented within
this paper with numerical experiments.

The domain~$\Omega$ was chosen to be the unit square $\Omega:=(0,1)^2$.
As mentioned in Section~\ref{sec:2}, the weak inf-sup-condition~\citecond{(S)} 
can be shown for the Taylor-Hood element only if at least one vertex of each element
is located in the interior of the domain~$\Omega$. As this is not satisfied for the standard 
decomposition of the unit square into two triangular elements, we have chosen the coarsest 
grid (grid level~$k=0$) to be a decomposition of the domain~$\Omega$ into $8$ triangles, 
cf. Fig.~\ref{fig:1}. The grid levels~$k=1,2,\ldots$ were constructed
by uniform refinement, i.e., every triangle was decomposed into four
subtriangles.

The desired velocity field (desired state) $u_D$ was chosen to be
\begin{equation*}
	u_D(\xi_1,\xi_2) := \left\{
				\begin{array}{ll}
					\left(\begin{array}{c}\xi_2-\tfrac12\\\tfrac12-\xi_1\end{array}\right)	& 
						\quad\mbox{for } \sqrt{\left(\xi_1-\tfrac12\right)^2+\left(\xi_2-\tfrac12\right)^2} < \frac{4}{5}\\
					0	& \quad\mbox{otherwise.}
				\end{array}
				\right.
\end{equation*}
The desired velocity field is visualized in the left-hand-side picture in both, Fig.~\ref{fig:3} and~\ref{fig:4}.

For solving the discretized KKT-system, we have used the proposed W-cycle multigrid method. We have applied
$\nu$ pre- and $\nu$ post-smoothing steps using the normal equation smoother. The
matrix $\mcL_k$ was chosen as follows
\begin{equation}\label{eq:defLnr}
        \mcL_k :=
        \left(
                \begin{array}{cccc}
                        \hat{A}_k \\
                        & \hat{S}_k \\
                        && \alpha^{-1} \hat{A}_k \\
                        &&& \alpha^{-1} \hat{S}_k
                \end{array}
        \right),
\end{equation}
where $\hat{A}_k := \mbox{diag }(M_{U,k}+\alpha^{1/2}K_{U,k})$ and $\hat{S}_k:=\alpha \,\mbox{diag }(D_k\hat{A}_k^{-1}D_k^T)$.
Here, $M_{U,k}$ and $K_{U,k}$ are the mass matrix and the stiffness matrix, representing the $L^2$-inner product
and the $H^1$-inner product in $U_k$, respectively. The matrix $D_k$ represents the bilinear form $d(u_k,p_k)=(\nabla\cdot u_k,p_k)_{L^2(\Omega)}$
on $U_k \times P_k$. Note that the matrix $\mcL_k$, introduced above, is spectrally equivalent to the matrix
$\mcL_k$, introduced in Section~\ref{sec:3}. Therefore, the choice proposed above is also covered by the
convergence theory. The damping parameter was chosen to be~$\tau=0.35$ for all grid levels~$k$ and all choices of~$\alpha$.

\begin{figure}[ht]%
\begin{center}
        \includegraphics[scale=.4]{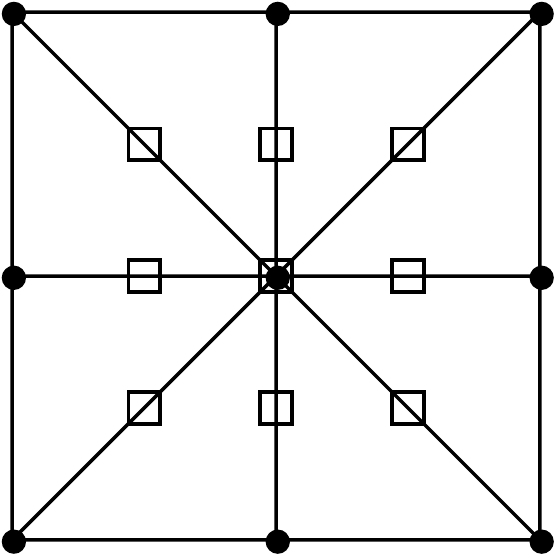}\qquad
        \includegraphics[scale=.4]{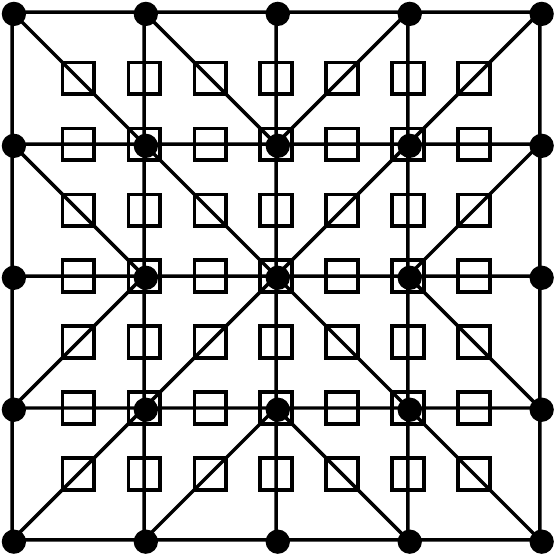}
        \caption{Discretization on grid levels $k=1$ and $k=2$, where the squares denote the degrees of freedom of (the components of) 
	$u$ and $\lambda$ are the the dots denote the degrees of freedom of $p$ and $\mu$}
        \label{fig:1}        
\end{center}
\end{figure}%
\begin{figure}[ht]%
\begin{center}
        \includegraphics[scale=.8]{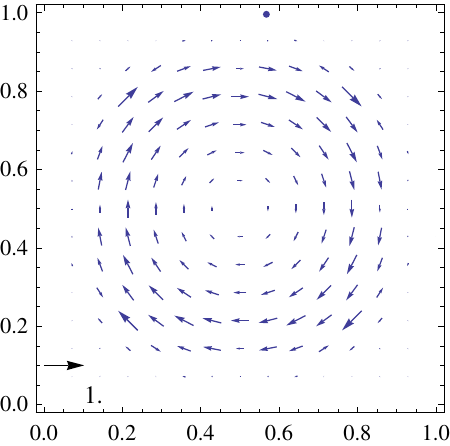}
        \includegraphics[scale=.8]{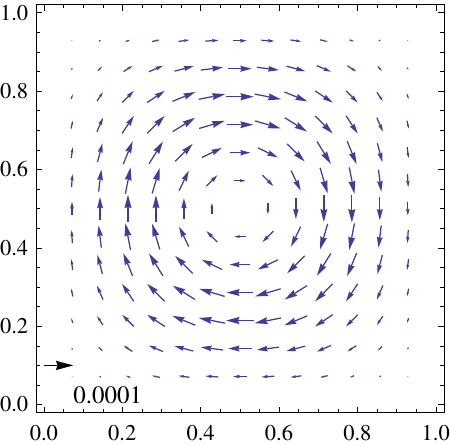}
        \includegraphics[scale=.8]{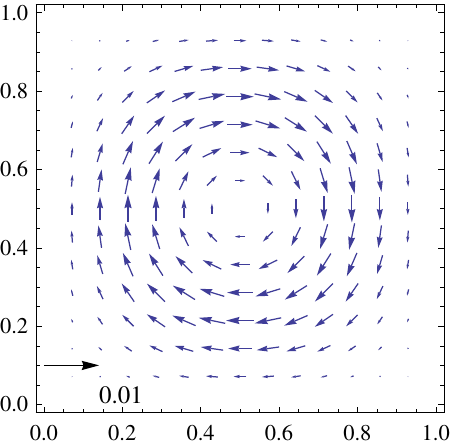}
        \caption{Desired velocity field $u_D$, optimal velocity field $u$ and optimal control $f$ for $\alpha = 1$ on grid level $k=3$}
        \label{fig:3}        
\end{center}
\end{figure}%
\begin{figure}[ht]%
\begin{center}
        \includegraphics[scale=.8]{desired}
        \includegraphics[scale=.8]{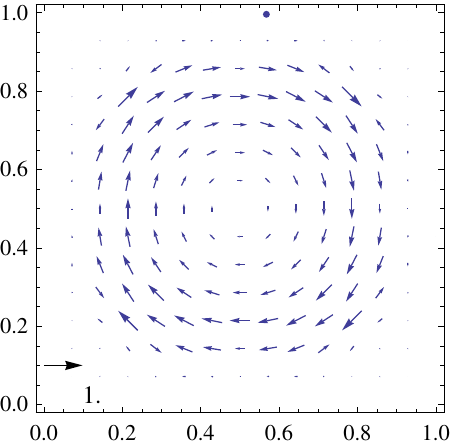}
        \includegraphics[scale=.8]{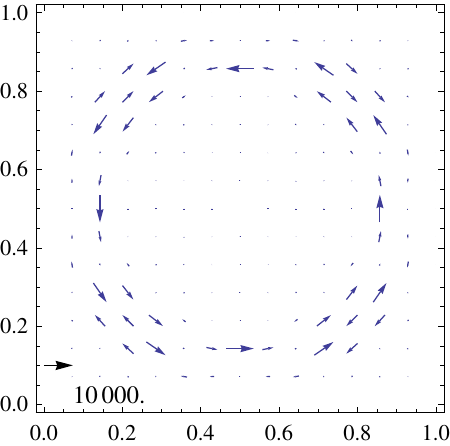}
        \caption{Desired velocity field $u_D$, optimal velocity field $u$ and optimal control $f$ for $\alpha = 10^{-12}$ on grid level $k=3$}
        \label{fig:4} 
\end{center}
\end{figure}%

The solution of the optimal control problem can be seen in Fig.~\ref{fig:3} and \ref{fig:4}. Note that the desired
velocity field is a general $L^2$-function (due to the jump) and therefore it cannot be reached by the (optimal) velocity 
field which is an $H^1$-function and therefore continuous. For the case $\alpha=1$, we observe the optimal velocity field and the control
to be rather smooth. (The control does not take large values). For small values of $\alpha$, like $\alpha=10^{-12}$,
the desired velocity field is approximated quite well, cf. Fig.~\ref{fig:4}. This is achieved
by rather large values of the control (high forces) which are concentrated on the region where the desired velocity field has its
jump. Forces have to be applied with the same orientation as the desired state, as well as with the opposite orientation. (In
the picture mainly the forces with opposite orientation can be seen.) As mentioned
in the introduction, we are interested in a fast linear solver which also works well for such small choices of $\alpha$.

The number of iterations and the convergence rate were measured as
follows: we start with $ x_k^{(0)}=0$ and measure the reduction of the error in each step
using the residual norm $\hnorm \cdot \hnorm_{2,k}$. The iteration was stopped when
the initial error was reduced by a factor of $\epsilon = 10^{-6}$. The convergence rates
$q$ is the mean convergence rate in this iteration, i.e.,
\begin{equation*}
        q = \left(\frac{\hnorm x_k^{(n)}-x_k^*\hnorm_{2,k}}{\hnorm x_k^{(0)}-x_k^*\hnorm_{2,k}}\right)^{1/n},
\end{equation*}
where $n$ is the number of iterations needed to reach the stopping
criterion. Here, $x_k^*$ is the exact solution and $x_k^{(i)}$ is the $i$-th iterate.

\begin{table}[ht]%
	\begin{center}
        \begin{tabular}{p{0.2cm}p{1.cm}p{0.2cm}p{1.cm}p{0.2cm}p{1.cm}p{0.2cm}p{1.cm}}
        \hline\noalign{\smallskip}
                \multicolumn{2}{l}{$\nu=1+1$} &
		 \multicolumn{2}{l}{$\nu=2+2$} &
		 \multicolumn{2}{l}{$\nu=4+4$} &
                 \multicolumn{2}{l}{$\nu=8+8$} \\
               $n$ & $q$& $n$ & $q$& $n$ & $q$& $n$ & $q$\\
        \noalign{\smallskip}\hline\noalign{\smallskip}
	        $61$&$0.796$&$32$&$0.647$&$21$&$0.507$&$15$&$0.390$\\
        \noalign{\smallskip}\hline\noalign{\smallskip}
        \end{tabular}
	\end{center}
        \caption{Number of iterations $n$ and convergence rate $q$ depending on $\nu=\nu_{pre}+\nu_{post}$, the 
	number of pre- and post-smoothing steps, on grid level $k=4$ for $\alpha=1$}
        \label{tab:0}        
\end{table}%
\begin{table}[ht]%
	\begin{center}
        \begin{tabular}{p{1.00cm}p{0.2cm}p{1.cm}p{0.2cm}p{1.cm}p{0.2cm}p{1.cm}p{0.2cm}p{1.cm}p{0.2cm}p{1.cm}}
        \hline\noalign{\smallskip}
               & \multicolumn{2}{l}{$\alpha=1$} &
		 \multicolumn{2}{l}{$\alpha=10^{-3}$} &
		 \multicolumn{2}{l}{$\alpha=10^{-6}$} &
                 \multicolumn{2}{l}{$\alpha=10^{-9}$} &
                 \multicolumn{2}{l}{$\alpha=10^{-12}$} \\
               & $n$ & $q$& $n$ & $q$& $n$ & $q$& $n$ & $q$  &$n$ & $q$\\
        \noalign{\smallskip}\hline\noalign{\smallskip}
         $k=3$ &$32$&$0.648$&$33$&$0.651$&$35$&$0.673$&$48$&$0.749$&$51$&$0.760$\\
         $k=4$ &$32$&$0.647$&$32$&$0.646$&$33$&$0.657$&$46$&$0.738$&$73$&$0.827$\\
         $k=5$ &$32$&$0.645$&$32$&$0.644$&$32$&$0.644$&$39$&$0.697$&$60$&$0.793$\\
         $k=6$ &$31$&$0.636$&$31$&$0.636$&$31$&$0.635$&$32$&$0.647$&$46$&$0.739$\\
         $k=7$ &$29$&$0.620$&$29$&$0.620$&$29$&$0.618$&$29$&$0.621$&$42$&$0.716$\\
        \noalign{\smallskip}\hline\noalign{\smallskip}
        \end{tabular}
	\end{center}
        \caption{Number of iterations $n$ and convergence rate $q$ for $\nu=2+2$ pre- and post-smoothing steps}
        \label{tab:1}        
\end{table}%
In Table~\ref{tab:0} we compare for a fixed grid level (level~$k=4$) and a fixed choice
$\alpha=1$ the convergence rates for several choices of $\nu$, the number of pre- and
post-smoothing steps. We see that the convergence rates behave approximately like~$\nu^{-1/2}$. 
This is consistent with the theory which
guarantees the convergence rate being bounded by $C\,\nu^{-1/2}$ as this only describes
the asymptotic behavior.

In Table~\ref{tab:1} we compare various grid levels~$k$ and choices of the parameter~$\alpha$.
Here, we have used a fixed choice of $\nu=2+2$ pre- and post-smoothing steps. First we observe
that the number of iterations seems to be well-bounded for all grid levels~$k$ which yields
an optimal convergence behavior. Moreover, we see that the number of iterations is also
well-bounded for a wide range of choices of the parameter~$\alpha$, i.e., we observe
also robust convergence as predicted by the convergence theory.

It has to be mentioned that for the model problem, also the (more efficient) V-cycle multigrid method
converges with rates comparable to the convergence rates of the W-cycle multigrid method. However,
the V-cycle is not covered by the convergence theory.

\section{Conclusions and Further Work}\label{sec:5}

In the present paper we have shown that the construction of an
all-at-once multigrid method for a Stokes control problem is possible. Here, a preconditioned 
normal equation smoother was chosen. The overall numerical complexity of this method seems to be
comparable to block-preconditioned MINRES iterations, cf., e.g., Table~4.2 
in~\cite{Zulehner:2010}, which shows the number of MINRES iterations needed. (Note that in each
MINRES step one multigrid cycle is applied to each component of the overall block-matrix, i.e., 
to the velocity~$u$, the pressure~$p$, the adjoined velocity~$\lambda$ and the adjoined pressure~$\mu$.)

One advantage of the all-at-once multigrid method, introduced in the present paper, is the fact
that an outer iteration is not necessary, the multigrid iteration is an linear iteration scheme
which can be directly applied to
solve the problem. As we could show the approximation property for a particular choice of
norms, the construction of other smoothers is of particular interest. The convergence rates we
have observed in this paper for a multigrid method with normal equation smoothing
are comparable with the convergence rates observed in~\cite{Takacs:Zulehner:2012} 
for a multigrid method with normal equation smoothing applied to an 
optimal control problem with elliptic state equation. For that problem we have
seen that other smoothers are available which lead to much faster convergence rates,
cf.~\cite{Takacs:Zulehner:2011} and others. Similar improvements were possible
for the generalized Stokes problem, cf.~\cite{Takacs:2013}. Therefore, it seems to be reasonable 
to construct faster smoothers also for the the Stokes control problem.

\textbf{Acknowledgements.} The author thanks Markus Kollmann for providing parts of the code used to compute 
the numerical results presented in this paper. Moreover, the support of the numerical analysis
group of the Mathematical Institute, University of Oxford, is gratefully acknowledged.

\bibliographystyle{plain}
\bibliography{literature}

\vspace{-3cm}
\parbox{20cm}{\vspace{4cm} \textbf{The original publication is available at www.springerlink.com:}\\
		\url{http://link.springer.com/article/10.1007\%2Fs00211-014-0674-5} \vspace{-3cm}}

\end{document}